\newtheorem{theorem}{Theorem}[section]
\newtheorem{proposition}[theorem]{Proposition}
\newtheorem{lemma}[theorem]{Lemma}
\newtheorem{corollary}[theorem]{Corollary}
\theoremstyle{definition}
\newtheorem{definition}[theorem]{Definition}
\theoremstyle{remark}
\newtheorem{remark}[theorem]{Remark}
\theoremstyle{remark}
\theoremstyle{remark}
\newcommand{\supp}{\mathrm{supp}}
\newcommand{\id}{\mathrm{Id}}
\begin{document}

\title{Finite Factor Representations of Higman-Thompson groups}

\author{Artem Dudko%
  \thanks{email: \texttt{artem.dudko@stonybrook.edu}}}
\affil{Department of Mathematics, Stony Brook University,   NY}

\author{Konstantin Medynets%
  \thanks{email: \texttt{medynets@usna.edu}}}
\affil{Department of Mathematics, U.S. Naval Academy, Annapolis, MD}

\date{}
\maketitle

\begin{abstract} We prove that  the only finite factor-representations of  the Higman-Thompson groups  $\{F_{n,r}\}$, $ \{G_{n,r}\}$ are the regular representations and  scalar representations arising from   group  abelianizations.  As a corollary, we obtain that any measure-preserving ergodic action  of a simple Higman-Thompson group must be essentially free. Finite factor representations of other classes of groups are also discussed.
\end{abstract}

\section{Introduction}

The goal of this paper is to describe finite (in the sense of Murray-von Neumann) factor-representations of the Higman-Thompson groups (see Section \ref{Applications} for the definition). The discussion of historical importance of these groups and their various algebraic properties can be found  in \cite{Brown:1987}, \cite{CFP:1996}, and \cite{Stein:1992}.  The following  is the main result of the present paper. \medskip

\noindent  {\bf Theorem. }{\it Let $G$ be a group from the Higman-Thompson families $\{F_{n,r}\}$, $ \{G_{n,r}\}$ and $\pi$ be a finite factor representation of $G$. Then either $\pi$ is the regular representation or $\pi$ has  the form $$\pi(g) = \rho([g])\id,$$ where $[g]$ is the image of $g$ in the abelianization $G/G'$, $G'$ is the commutator of $G$, $\rho: G/G' \rightarrow \mathbb T$ is a group homomorphism and $\id$ is the identical operator in some Hilbert space.}\medskip

 Since finite factor representations are in one-to-one correspondence with positive definite class functions (termed {\it characters}, see Definition \ref{DefinitionCharacter}),  this  shows that the characters of any Higman-Thompson group $G$   are convex combinations of the  regular character and  characters of its abelianization $G/G'$. The structure of group characters has implications on dynamical properties of group actions.  Suppose that a group $G$ admitting  no non-regular/non-identity characters acts on a probability measure space $(X,\mu)$ by measure-preserving transformations. Setting $\chi(g) = \mu(Fix(g))$, $Fix(g) = \{x\in X : g(x) = x\}$, one can check that the function $\chi$ is a character. So our results imply that  any non-trivial ergodic action of $G$  on a probability measure space $(X,\mu)$ is  essentially free, i.e.  $\mu(Fix(g)) = 0$ for every $g\in G\setminus \{e\}$, see Theorem \ref{TheoremErgodicAction}.

In our proofs, we mostly utilize the fact that the commutators of Higman-Thompson groups have no non-atomic invariant measures on the spaces where they are defined. This means that the orbit equivalence relations generated by their actions are compressible \cite{Kechris:1994}. This observation allows us to state the main result in terms of dynamical properties of group actions (Theorems \ref{TheoremNoCharactersSimpleGroups} and \ref{TheoremCharactersNonSimpleGroups}) ---   transformation groups whose actions are ``compressible''  (Definition \ref{DefinitionCompressibleBase}) do  not admit non-regular $II_1$-factor representations, except for possible finite-dimensional representations. This dynamical formulation allows us to apply the main result to other classes of transformation groups (Section \ref{Applications}).

In \cite{Vershik:NonfreeActions} Vershik suggested that the characters  of ``rich''  groups  should often arise as $\mu(Fix(g))$ for some invariant measure $\mu$. Thus, this paper confirms Vershik's conjecture  in the sense that  the absence of non-trivial invariant measures implies the absence of non-regular characters. We also mention the paper \cite{Dudko_Medynets:2012}, where Vershik's conjecture was established  for full groups of Bratteli diagrams.

The structure of the paper is the following. In Section \ref{SectionGeneralTheory} we build the general theory of finite factor representations for groups admitting compressible  actions. In Section \ref{Applications}, we apply our general results to the Higman-Thompson groups and to the full groups of irreducible shifts of finite type \cite{Matui:2012}.

%
%
%

\section{General Theory}\label{SectionGeneralTheory}

In this section we show that if a group admits a compressible action on a topological space, then this group, under some algebraic assumptions, has no non-trivial factor representations.  We will start with definitions  from the representation theory of infinite groups.

\begin{definition}\label{DefinitionCharacter}
A {\it character} of a group $G$ is a function $\chi:G\rightarrow
\mathbb{C}$ satisfying the following conditions
\begin{itemize}
\item[1)] $\chi(g_1g_2)=\chi(g_2g_1)$ for any $g_1,g_2\in G$;
\item[2)] the matrix
$\left\{\chi\left(g_ig_j^{-1}\right)\right\}_{i,j=1}^n$ is
nonnegatively defined for any $n$ and $g_1,\ldots,g_n\in G$;
\item[3)] $\chi(e)=1$. Here $e$ is the group identity.
\end{itemize}

A character $\chi$ is called {\it indecomposable} if it
cannot be written in the form $\chi=\alpha
\chi_1+(1-\alpha)\chi_2$, where $0<\alpha<1$ and $\chi_1,\chi_2$ are
distinct characters.
\end{definition}

For a unitary representation $\pi$ of a group $G$ denote by
$\mathcal{M}_\pi$ the $W^{*}$-algebra generated by the operators of
the representation $\pi$.
 Recall that the {\it commutant} $S'$ of a set
$S$ of operators in a Hilbert space $ H $ is the algebra
$S'=\{A\in B( H ):AB=BA\text{ for any }B\in S\}$.

\begin{definition} A representation $\pi$ of a group $G$ is called a {\it
factor representation} if the algebra $\mathcal{M}_\pi$ is a
factor, that is $\mathcal{M}_\pi \cap \mathcal{M}_\pi'=\mathbb{C}\id$.
\end{definition}

 The indecomposable characters on a group $G$ are in one-to-one correspondence with the {\it finite type}\footnote{The classification of factors  can be found in \cite[Chapter 5]{Tak}.} factor  representations
of $G$. Namely, starting with an indecomposable character $\chi$ on
$G$ one can construct a triple
$\left(\pi, H,\xi\right)$, referred to as the
{\it Gelfand-Naimark-Siegal} (abbr. GNS) {\it construction}. Here $\pi$ is a finite
type factor representation acting in the space $H$,
and $\xi$ is a unit vector in $ H$ such that
$\chi(g)=(\pi(g)\xi,\xi)$ for every $g\in G$, see, for example, \cite[Sect. 2.3]{Dudko_Medynets:2012}. Note
that the vector $\xi$ is cyclic and separating for the von Neumann  algebra $\mathcal{M}_\pi$. The latter means that if $A\xi = 0$ for some $A\in \mathcal M_\pi$, then $A = 0$.

\begin{remark} We note that each character defines a factor representation up to {\it quasi-equivalence}. Two unitary representations $\pi_1$ and $\pi_2$ of
the same group $G$ are called {\it quasi-equivalent} if there is an isomorphism of von
Neumann algebras $\omega :\mathcal M_{\pi_1} \rightarrow \mathcal M_{\pi_2}$ such that $\omega(\pi_1(g)) = \pi_2(g)$  for each $g\in G$. For example, all $II_1$ factor representations of an amenable group are hyperfinite \cite[Corollary 6.9 and Theorem 6]{Connes:1976} and, hence, generate isomorphic algebras. At the same time, they might be not quasi-equivalent.
\end{remark}

Suppose that $G$ is an  infinite conjugacy class (abbr. ICC) group. Then its left regular representation $\pi$ generates a $II_1$-factor and the function $\chi(g) = (\pi(g)\delta_e,\delta_e)$ is an indecomposable  character (termed the {\it regular character}).

\begin{definition}\label{DefinitionCompressible} We will say that a group $H$ {\it has no  proper characters} if $\chi$ being an indecomposable character of $H$ implies that either $\chi$ is {\it identity character} given by $$\chi(g) = 1\;\;\text{for every}\;\;g\in G$$ or the {\it regular character} defined as $$\chi(g) = 0\;\;\text{if}\;\;g\neq e\;\;\text{and}\;\;\chi(e) = 1.$$ We notice that for non-ICC groups the regular characters are decomposable. 
\end{definition}

Fix   a {\it regular Hausdorff topological space} $X$. Notice that any two distinct points of $X$ have open neighborhoods with disjoint closures.  To exclude trivial counterexamples to our statements we assume that the set $X$ is infinite.  Suppose that a group $G$ acts on $X$. For a group element $g\in G$, denote its {\it support} by $supp(g) = \overline{\{x\in X : g(x)\neq x\}}$.

\begin{definition}\label{DefinitionCompressibleBase}  We will say that the action of $G$ on $X$ is \emph{compressible}  if there exists a base of the topology $\mathfrak{U}$ on $X$ such that
\begin{itemize}\item[(i)] for every $g\in G$  there exists $U\in\mathfrak{U}\,$ such that $\supp(g)\subset U$;
\item[(ii)] for every $U_1,U_2\in \mathfrak{U}\,$ there exists $g\in G$ such that $g(U_1)\subset U_2$;
\item[(iii)] for every $U_1,U_2,U_3\in\mathfrak{U}$ with $\overline{U}_1\cap \overline{U}_2=\varnothing$ there exists $g\in G$ such that $g(U_1)\cap U_3=\varnothing$ and  $\supp(g)\cap U_2=\varnothing$.
\item[(iv)] for any $U_1,U_2\in \mathfrak{U}$ there exists $U_3\in\mathfrak{U}$ such that $U_3\supset U_1\cup U_2$.
\end{itemize}
\end{definition}

\begin{remark} Suppose that $X$ is a Polish space. If an action of  $G$ on $X$ is compressible, then  the $G$-action has no probability invariant  measure. The latter is equivalent to  the $G$-orbit equivalence relation being compressible (see \cite{Kechris:1994} and references therein). This observation motivates our terminology.
\end{remark}

The following result relates dynamical properties of group actions to the functional properties of group characters.

\begin{proposition}\label{PropositionCompressible} Let $G$ be a countable group admitting a compressible action
by homeomorphisms on some regular Hausdorff topological space $X$. Then for every non-regular indecomposable character $\chi$ of $G$ there exists $g\neq e$ such that $|\chi(g)|=1$.
\end{proposition}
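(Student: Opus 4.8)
The plan is to exploit the GNS construction for $\chi$ together with the compressibility axioms to show that for any non-regular $\chi$ there is a nontrivial group element $g$ whose image $\pi(g)$ fixes the GNS vector $\xi$, hence satisfies $|\chi(g)| = \chi(e) = 1$. First I would recall that non-regularity of the indecomposable character $\chi$ means there exists some $h \neq e$ with $\chi(h) \neq 0$; by continuity/positive-definiteness considerations one may even arrange $|\chi(h)|$ bounded away from $0$, but what we really want is an element acting \emph{trivially} on $\xi$, which is much stronger. The key mechanism is the following: if $g$ has support inside an open set $U$, and if we can find, via axiom (iii), a sequence of group elements $s_k$ that ``push'' $U$ onto mutually disjoint regions while fixing a common reference set, then the conjugates $s_k g s_k^{-1}$ have mutually ``independent'' supports. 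A standard commutator/ultraproduct argument in the tracial von Neumann algebra $\mathcal{M}_\pi$ then forces $\pi(g)$ to be close to a scalar; indecomposability of $\chi$ pins that scalar down, and the ICC-type behavior coming from the non-regularity hypothesis rules out the scalar being anything other than forcing $g$ effectively trivial in the representation — unless $|\chi(g)| = 1$ already.

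More concretely, the key steps, in order, would be: (1) Fix $h \neq e$ with $c := \chi(h) \neq 0$ and take $U \in \mathfrak{U}$ with $\supp(h) \subset U$ (axiom (i)). (2) Using axioms (iii) and (iv), inductively construct group elements $g_k$ such that the translates $g_k(U)$ are pairwise disjoint and disjoint from some fixed $U_0$, while $\supp(g_k) \cap U_0 = \varnothing$; set $h_k := g_k h g_k^{-1}$, so the $h_k$ are conjugate to $h$ (hence $\chi(h_k) = \chi(h) = c$) and have pairwise commuting images with ``disjoint'' supports. (3) In the trace, compute $\|\pi(h_1) + \cdots + \pi(h_N) - Nc\cdot\mathrm{something}\|_2$ — more precisely, estimate $\tau\big((\pi(h_i)-P)(\pi(h_j)-P)^*\big)$ for $i \neq j$ using that disjointly-supported elements behave independently with respect to the trace, where $P$ is the spectral-type projection associated with the fixed-point behavior of $h$. (4) Conclude from the $L^2$-estimate, via a Cauchy–Schwarz / law-of-large-numbers argument in $L^2(\mathcal{M}_\pi,\tau)$, that $\pi(h)$ must have a nonzero component that is fixed by $\xi$, i.e. there is a projection $Q$ with $\pi(h)Q = Q$ and $\tau(Q) > 0$; then replace $G$ by analyzing the element's action more carefully, or use minimality/indecomposability to upgrade $Q$ to the identity and get $\pi(g)\xi = \xi$ for a suitable nontrivial $g$ built from $h$, giving $|\chi(g)| = 1$.

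The main obstacle, I expect, is step (4): passing from an $L^2$-smallness estimate on averages of the conjugates $\pi(h_k)$ to the existence of an honest nontrivial element $g$ with $\pi(g)\xi = \xi$ (equivalently $|\chi(g)|=1$), rather than merely $|\chi(g)|$ being large. This is where indecomposability of $\chi$ (so that $\mathcal{M}_\pi$ is a factor and $\tau$ is the unique trace) and axiom (ii) (which provides enough ``transitivity'' among the basic open sets to transport any such fixed behavior everywhere) must be combined: one likely argues that the projection onto the $\pi(h)$-fixed vectors is, after conjugating by elements furnished by (ii), either $0$ everywhere — which one shows contradicts $\chi(h) \neq 0$ together with the disjointness estimate — or has full trace on a piece that can be enlarged. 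A secondary technical point is verifying that disjointly supported homeomorphisms give genuinely ``free/independent'' contributions to the trace in $\mathcal{M}_\pi$; this needs that $\chi$ restricted to the subgroup generated by such elements splits as a product, which follows from positive-definiteness and the commutation relations but should be stated carefully as a lemma before the main estimate.
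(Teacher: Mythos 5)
Your proposal contains several of the right ingredients (conjugates with pairwise disjoint supports, a multiplicativity lemma for disjointly supported elements, an $L^2$/averaging estimate in $(\mathcal M_\pi,\tau)$), but the overall logical architecture is wrong, and the step you yourself flag as the main obstacle --- step (4) --- does not work. The statement should be proved in the contrapositive: assume $|\chi(g)|<1$ for \emph{every} $g\neq e$ (note $|\chi(g)|\le 1$ always, by positive definiteness) and deduce that $\chi$ is the regular character. Under that assumption, multiplicativity gives $\chi\bigl((g_1hg_1^{-1})\cdots(g_nhg_n^{-1})\bigr)=\chi(h)^n\to 0$, so every basic open set supports elements with arbitrarily small $|\chi|$; one then conjugates an arbitrary $g\neq e$ by such small-character elements $h_j$ supported in sets that $g$ moves off themselves, checks via multiplicativity that the vectors $\xi_j=\pi(h_jgh_j^{-1})\xi$ are almost orthogonal, and the estimate $|\chi(g)|=\frac1n|(\xi_1+\cdots+\xi_n,\xi)|\le\frac1n(n+n(n-1)\varepsilon)^{1/2}$ forces $\chi(g)=0$. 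Your direct construction, by contrast, tries to manufacture from a single $h$ with $\chi(h)\neq 0$ a projection $Q$ with $\pi(h)Q=Q$ and $\tau(Q)>0$, and then an honest group element fixing $\xi$. There is no reason for such a $Q$ to exist: if $\chi(h)=c$ with $0<|c|<1$, the unitary $\pi(h)$ may have no spectrum at $1$ at all, and the element $g$ with $|\chi(g)|=1$ whose existence the proposition asserts need have no structural relation to $h$. No amount of transitivity from axiom (ii) repairs this; the conclusion is obtained only by ruling out the alternative globally.

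A second, more local gap: your multiplicativity lemma is asserted to follow ``from positive-definiteness and the commutation relations.'' That is not enough. Disjointness of supports gives commutation, but commuting unitaries need not have multiplicative traces. The actual proof uses the dynamics and factoriality together: by axioms (i), (iii), (iv) one finds $r_n$ with $r_nhr_n^{-1}=h$ and $\supp(r_ngr_n^{-1})$ disjoint from the supports of any prescribed finite set $F_n\subset G$; a weak limit $Q$ of $\pi(r_ngr_n^{-1})$ then commutes with all of $\pi(G)$, hence lies in the center of the factor $\mathcal M_\pi$, hence equals $\chi(g)\mathrm{Id}$, and evaluating $\lim_n(\pi(r_nghr_n^{-1})\xi,\xi)$ yields $\chi(gh)=\chi(g)\chi(h)$. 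Without this argument (or an equivalent one) your trace estimates in step (3) have no foundation.
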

\begin{proof}  Consider a proper indecomposable character $\chi$ of $G$. Assume that $|\chi(g)|<1$ for all $g\neq e$. Let $(\pi,H,\xi)$ be the GNS-construction associated to $\chi$.

 (1) We notice that  the definition of the compressible action implies that $\chi$ has the multiplicativity property in the sense that if $U_1,U_2\in \mathfrak{U}\,$ and $g,h\in G$ are such that
  \begin{equation*}\supp(g)\subset U_1,\supp(h)\subset U_2\;\text{and}\;  \overline{U}_1\cap\overline{U}_2=\varnothing\end{equation*} then \begin{equation}\label{mult}\chi(gh)=\chi(g)\chi(h).
  \end{equation} Indeed, find an increasing sequence of finite sets $F_n\subset G$  with $\bigcup_n F_n = G$. Then by the conditions (i) and (iv) of Definition \ref{DefinitionCompressibleBase}, we can find open sets $V_n\in \mathfrak U$ such that \[V_n\supset \bigcup\limits_{f\in F_n}\supp(f).\] By the condition (iii) there exist elements $r_n\in G$ such that \[r_n(U_1)\cap V_n=\varnothing\;\text{and}\;\supp(r_n)\cap U_2=\varnothing.\]
Then  $r_n h r_n^{-1} = h$  and  $supp(r_ng r_n^{-1})\cap supp(f) = \varnothing$ for every $f\in F_n$. Passing to a subsequence if needed, we can assume that $\pi(r_ng r_n^{-1})$  converges weakly to an operator $Q\in \mathcal M_\pi$. Notice that $tr(Q) = \chi(g)$. Since the operator $Q$ commutes with $\pi(F_n)$ for every $n$, we get that $Q$ belongs to the center of $\mathcal M_\pi$. Therefor, $Q$ is scalar and $Q = \chi(g) \id$. Thus $$\chi(g h) =
\lim_{n\to \infty} (\pi(r_ngh r_n^{-1})\xi,\xi) = (Q\pi(h)\xi,\xi) = \chi(g)\chi(h).$$

(2)   We claim that for any $\varepsilon>0$ and any open set $U$ there exists $g\in G$ with $supp(g)\subset U$ and $|\chi(g)| < \varepsilon$. Indeed, fix an element $h\neq e$ and $n\in \mathbb{N}$. Find $n$ subsets $V_1,\ldots,V_n\in\mathfrak{U}\,$ such that $\overline{V}_j\cap \overline{V}_k=\varnothing$ for $j\neq k$.  By assumptions (i) and (ii) we can choose elements $g_1,\ldots,g_n\in G$ such that $g_j(\supp (h))\subset V_j$ for each $j$. Set
 $$f=(g_1hg_1^{-1})(g_2hg_2^{-1})\cdots (g_nhg_n^{-1}).$$ By multiplicativity, we obtain that $\chi(f)=\chi(h)^n$. Choosing $n$ sufficiently large we get an element $f$ with $|\chi(f)|<\varepsilon$. By assumptions (i) and (ii) we can find an element $g$ conjugate to $f$ with $\supp(g)\subset U$, which proves the claim.

(3)  Consider an element $g\in G,\;g\neq e$. Find an open set $U$ with $\overline{g(U)}\cap \overline{U} = \varnothing$.  Fix $\varepsilon > 0$ and $n\in \mathbb{N}$. Using the condition (ii) and (iv) of Definition \ref{DefinitionCompressibleBase}, we can find  subsets $U_1,\ldots,U_n,V_1,\ldots,V_n\in \mathfrak{U}$ with pairwise disjoint closures such that $g(V_i)\subset U_i\subset U$ for each $i$.
Find $h_j\in G,j=1,\ldots, n$ supported by $U_j$ with $|\chi(h_j)| <\varepsilon$. Set $\xi_j = \pi(h_jg h_j^{-1})\xi$.  Then for $i\neq j$, the multiplicativity of $\chi$ implies that  \begin{eqnarray*}(\xi_{i},\xi_{j}) & = &  \chi(h_{j}g^{-1} h_{j}^{-1} h_{i}g h_{i}^{-1}) \\
& = & \chi(h_{j}(g^{-1} h_{j}^{-1}g)(g^{-1} h_{i}g)h_{i}^{-1}) \\
& = & \chi(h_{j})\chi(g^{-1} h_{j}^{-1}g)\chi(g^{-1} h_{i}g)\chi(h_{i}^{-1}).\end{eqnarray*}
As  $|\chi(h_j)| < \varepsilon$, we obtain that $|(\xi_{j},\xi_{i})| < \varepsilon$.
Thus,
\[\|\xi_1+\ldots+\xi_n\|\le \big(n+n(n-1)\varepsilon\big)^{\frac{1}{2}}.\]
Since $(\xi_l,\xi)=\chi(g)$ for
each $l$, we have $$|\chi(g)|=\tfrac{1}{n}|
(\xi_{1}+\xi_2+\ldots+\xi_{n},\xi)|\le
\tfrac{1}{n}\big(n+n(n-1)\varepsilon\big)^{\frac{1}{2}}.$$ When $n$
goes to infinity, we obtain
\[|\chi(g)|\le\varepsilon^{\frac{1}{2}}.\] Since $\varepsilon>0$ is
arbitrary, we conclude that $\chi(g)=0$. Thus, $\chi$ is the regular character.
\end{proof}

\begin{lemma}\label{LemmaIdentityCharacter} Let $G$ be a simple group and $\chi$ be a character.  If $|\chi(g)|=1$ for some $g\in G,g\neq e$, then
\[\chi(s)=1\;\;\text{for all}\;\;s\in G.\]
In particular, if $\chi$ is not the identity character, then
$|\chi(s)|<1$ for all $s\neq e$.
\end{lemma}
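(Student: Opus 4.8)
The plan is to exploit the standard fact that a character $\chi$ with $|\chi(g)| = 1$ for some $g$ forces $\pi(g)$ to be a scalar multiple of the identity, and then to use simplicity of $G$ to propagate this to all of $G$. First I would let $(\pi, H, \xi)$ be the GNS-construction attached to $\chi$, with $\xi$ cyclic and separating for $\mathcal{M}_\pi$ and $tr(A) = (A\xi,\xi)$ the canonical trace. Since $\pi(g)$ is unitary and $|(\pi(g)\xi,\xi)| = |\chi(g)| = 1 = \|\xi\|\cdot\|\pi(g)\xi\|$, the equality case of Cauchy--Schwarz gives $\pi(g)\xi = \lambda\xi$ for some $\lambda \in \mathbb{T}$ with $\lambda = \chi(g)$. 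The key step is then to upgrade this from an eigenvector statement to an operator statement: I claim $\pi(g) = \lambda\id$. Indeed, consider $A = \pi(g) - \lambda\id \in \mathcal{M}_\pi$; then $A\xi = 0$, and since $\xi$ is separating for $\mathcal{M}_\pi$, we get $A = 0$, i.e. $\pi(g) = \lambda\id$.

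Next I would use simplicity. The set $N = \{s \in G : \pi(s) \text{ is a scalar multiple of } \id\}$ is a normal subgroup of $G$ (it is clearly a subgroup, and it is normal because $\pi(hsh^{-1}) = \pi(h)\pi(s)\pi(h)^{-1} = \pi(s)$ whenever $\pi(s)$ is scalar). Since $g \neq e$ lies in $N$ and $G$ is simple, either $N = \{e\}$ or $N = G$; the first is impossible because $g \in N$, so $N = G$. Hence $\pi(s)$ is a scalar for every $s \in G$, say $\pi(s) = \rho(s)\id$ with $\rho : G \to \mathbb{T}$ a homomorphism and $\chi(s) = \rho(s)$.

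Finally I would pin down $\rho$. Since $\rho$ is a homomorphism from the simple group $G$ into the abelian group $\mathbb{T}$, its kernel is a normal subgroup, hence either trivial or all of $G$; if $G$ is infinite (which it is, being ICC or at least infinite in our setting) and simple, it is nonabelian, so $\rho$ cannot be injective, forcing $\ker\rho = G$, i.e. $\rho \equiv 1$ and $\chi(s) = 1$ for all $s$. (Alternatively, one can avoid invoking nonabelianness: a homomorphic image of a simple group is either trivial or isomorphic to $G$; a nontrivial subgroup of $\mathbb{T}$ isomorphic to a simple group would have to be $\mathbb{Z}/p$, which is not ICC and whose regular character is not indecomposable, but here $\chi$ being a genuine character with $|\chi(g)|=1$ already rules out the regular character, so decomposing through $\rho$ would contradict indecomposability unless $\rho$ is trivial.) The cleanest route is the kernel argument together with the fact that simple groups arising here are nonabelian. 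The last assertion of the lemma is then immediate: if $\chi$ is not the identity character, then no $s \neq e$ can have $|\chi(s)| = 1$, since that would force $\chi \equiv 1$ by what we just proved.

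The main obstacle I anticipate is purely bookkeeping: making sure the "separating vector" step is invoked correctly (it is exactly the tool highlighted in the excerpt) and that the final step legitimately rules out a nontrivial character of a finite cyclic quotient — but since the groups in play ($G'$ of Higman--Thompson groups, full groups of shifts of finite type) are infinite simple nonabelian groups, $\rho$ has no choice but to be trivial, so this is a non-issue in context. There is no genuinely hard analytic input here; the whole argument is the equality case of Cauchy--Schwarz plus normality of a pointwise-scalar subgroup.
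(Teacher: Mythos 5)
Your proof is correct and follows essentially the same route as the paper: the equality case of Cauchy--Schwarz plus the separating property of the GNS vector $\xi$ to get $\pi(g)=\chi(g)\id$, then simplicity to force $\pi$ to be scalar everywhere and the resulting homomorphism into $\mathbb{T}$ to be trivial. The only cosmetic difference is that the paper invokes simplicity via the normal closure of a commutator $hgh^{-1}g^{-1}$, while you use the normal subgroup $\{s:\pi(s)\text{ scalar}\}$ and the kernel of $\rho$; both hinge on $G$ being nonabelian (implicit in the paper's choice of a non-commuting $h$, and explicit in your remark), which holds for the infinite simple groups in play.
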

\begin{proof} Let $c=\chi(g)$, $|c|=1$. Consider the GNS construction $(\pi,H,\xi)$ corresponding to $\chi$. Using the Cauchy-Schwarz inequality
and the fact that the vector $\xi$ is separating, we obtain that
\[(\pi(g)\xi,\xi)=c\;\;\Rightarrow\;\;\pi(g)\xi=c\xi\;\;\Rightarrow
\pi(g)=c\id.\] Take an arbitrary element $h\in G$ which does not commute with $g$ and set
$s=hgh^{-1}g^{-1}$. Then $\pi(s)=\id$. It follows that
$\pi(s_1)=\id$ for all $s_1$ from the normal subgroup generated by
$s$. Since $G$ is simple, we get that $\pi(g) = \id$ for every $g\in G$. Thus, $\chi$ is the identity character.
\end{proof}

As a corollary of Lemma \ref{LemmaIdentityCharacter} and Proposition \ref{PropositionCompressible}
we immediately obtain the following result.

\begin{theorem}\label{TheoremNoCharactersSimpleGroups}  Let $G$ be a simple  countable group admitting a compressible action on a regular Hausdorff topological space $X$. Then $G$ has no  proper characters.
\end{theorem}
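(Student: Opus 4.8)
The plan is to simply chain together the two results already in place, namely Proposition~\ref{PropositionCompressible} and Lemma~\ref{LemmaIdentityCharacter}; the statement is their direct synthesis. Fix an arbitrary indecomposable character $\chi$ of $G$. By Definition~\ref{DefinitionCompressible}, proving that $G$ has no proper characters amounts to showing that $\chi$ is either the regular character or the identity character.

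First I would dispose of the trivial alternative: if $\chi$ is the regular character there is nothing to do. So assume $\chi$ is not regular. The hypotheses of Proposition~\ref{PropositionCompressible} are met verbatim --- $G$ is countable and acts by homeomorphisms on the regular Hausdorff space $X$, and that action is compressible --- so the proposition produces an element $g\in G$ with $g\neq e$ and $|\chi(g)|=1$. Now I would feed this $g$ into Lemma~\ref{LemmaIdentityCharacter}: since $G$ is simple and $\chi$ is a character attaining modulus $1$ at some non-identity element, the lemma forces $\chi(s)=1$ for every $s\in G$, i.e. $\chi$ is the identity character. Hence every indecomposable character of $G$ is either regular or identity, which is exactly the claim.

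There is no real obstacle here beyond confirming that the hypotheses of the two cited statements hold as stated --- the substantive work was carried out earlier, in the multiplicativity relation \eqref{mult} inside the proof of Proposition~\ref{PropositionCompressible} and in the separating-vector argument of Lemma~\ref{LemmaIdentityCharacter}. The only point worth a moment's care is that Proposition~\ref{PropositionCompressible} guarantees $|\chi(g)|=1$ for just a single non-identity $g$, but that is precisely the hypothesis Lemma~\ref{LemmaIdentityCharacter} requires, so the two results dovetail without any additional argument.
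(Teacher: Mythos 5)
Your proof is correct and is exactly the paper's own argument: the authors state the theorem as an immediate corollary of Proposition~\ref{PropositionCompressible} and Lemma~\ref{LemmaIdentityCharacter}, chained precisely as you describe. Nothing further is needed.
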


Let $G$ be a group. For a subgroup $R$ of $G$ and an element $g\in G$ set $C_R(g)=\{hgh^{-1}:h\in R\}$. Denote by $N(R)$ the normal closure of $R$ in $G$, i.e., the subgroup of $G$ generated by all elements of the form $grg^{-1},g\in G, r\in R$.
\begin{theorem}\label{TheoremCharactersNonSimpleGroups}  Let $G$ be a group and  $R$ be an ICC  subgroup of $G$ such that
\begin{itemize}

\item[(i)] $R$ has no proper characters;

\item[(ii)] for every $g\in G\setminus\{e\}$, there exists a sequence of distinct elements $\{g_i\}_{i\geq 1}\subset C_R(g)$ such that $g_i^{-1} g_j\in R$ for any $i,j$.
\end{itemize}
 Then each finite type factor representation $\pi$ of $G$ is either  regular  or  has the
form $$\pi(g)=\omega([g]),$$ where $\omega$ is a finite factor representation of $G/N(R)$ and $[g]\in G/N(R)$ is the coset of the
element $g$.
\end{theorem}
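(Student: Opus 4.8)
The plan is to take a finite type factor representation $\pi$ of $G$ with associated indecomposable character $\chi$ and GNS triple $(\pi, H, \xi)$, and split into two cases according to the behavior of $\chi$ on the subgroup $R$. First I would consider the restriction $\chi|_R$. Since $R$ has no proper characters, decomposing $\chi|_R$ into indecomposables and invoking (i), each ergodic component is either the identity character of $R$ or the regular character of $R$. I expect the key preliminary step to be showing that $\chi|_R$ must actually be \emph{pure} — either entirely the identity character or entirely the regular character — rather than a nontrivial convex combination; this should follow from the factoriality of $\mathcal M_\pi$ together with hypothesis (ii), which forces enough "mixing" of conjugates of any $g\in R$ that $\chi$ cannot be a genuine superposition on $R$. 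Concretely, one uses the argument in the style of Proposition~\ref{PropositionCompressible}: given $g\in R$, the distinct elements $g_i\in C_R(g)$ with $g_i^{-1}g_j\in R$, applied through the weak-limit/center argument, pin down $\chi(g)$.

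\textbf{Case 1: $\chi|_R$ is the regular character of $R$.} Here I would show $\pi$ is the regular representation of $G$. The idea is that for $g\ne e$, hypothesis (ii) gives infinitely many distinct conjugates $g_i = h_i g h_i^{-1}$ with $h_i\in R$ and $g_i^{-1}g_j\in R$. Set $\xi_i = \pi(h_i)\pi(g)\pi(h_i)^{-1}\xi$. Then $(\xi_i,\xi_j) = \chi(h_j g^{-1} h_j^{-1} h_i g h_i^{-1})$, and one rearranges this (exactly as in step (3) of Proposition~\ref{PropositionCompressible}) into a product of $\chi$-values of elements of $R$; since those elements are nontrivial elements of $R$ (using that the $g_i$ are distinct, so $g_i \ne g_j$) and $\chi|_R$ is the regular character, the cross terms vanish: $(\xi_i,\xi_j) = 0$ for $i\ne j$. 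The usual averaging estimate $\|\xi_1 + \cdots + \xi_n\|^2 = n$ combined with $(\xi_l,\xi)=\chi(g)$ then gives $|\chi(g)| \le n^{-1/2}\to 0$, so $\chi(g) = 0$ and $\pi$ is regular. One should double-check the bookkeeping that $g_i^{-1}g_j \in R\setminus\{e\}$ so that $\chi$ of it is genuinely $0$; this is where hypothesis (ii) is used in full strength.

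\textbf{Case 2: $\chi|_R$ is the identity character of $R$.} Here $\chi(r) = 1$ for all $r\in R$, so by the Cauchy--Schwarz/separating-vector argument of Lemma~\ref{LemmaIdentityCharacter}, $\pi(r) = \id$ for every $r\in R$. Since $\{g\in G : \pi(g) = \id\}$ is a normal subgroup of $G$ containing $R$, it contains $N(R)$; hence $\pi$ factors through $G/N(R)$, giving a representation $\omega$ of $G/N(R)$ with $\pi(g) = \omega([g])$, and $\omega$ is a finite type factor representation because $\mathcal M_\omega = \mathcal M_\pi$ is a finite factor. Combining the two cases yields the dichotomy in the statement.

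\textbf{Main obstacle.} The delicate point is the initial reduction showing $\chi|_R$ is pure (one of the two extreme characters of $R$) rather than a mixture: $R$ having "no proper characters" only tells us about its indecomposable characters, and a priori $\chi|_R$ could be, say, $\frac12(\text{identity}) + \frac12(\text{regular})$. Ruling this out requires using the factoriality of $\mathcal M_\pi$ and hypothesis (ii) to propagate rigidity from $R$ to all of $G$: one must argue that the central-limit-type weak limits of conjugates $\pi(g_i g g_i^{-1})$ land in the center $\mathbb C\id$, which forces $\chi$ to be multiplicative enough on $R$ that the mixed case is impossible. Getting this step clean — in particular verifying that the $g_i^{-1}g_j$ really do land in $R$ and that the relevant weak limits exist after passing to a subsequence — is the technical heart of the argument.
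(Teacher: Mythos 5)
Your two cases, taken individually, are handled correctly and coincide with the paper's steps: the Case 1 computation works because hypothesis (ii) gives $g_j^{-1}g_i\in R\setminus\{e\}$, so if $\chi$ vanishes on $R\setminus\{e\}$ the vectors $\xi_i=\pi(g_i)\xi$ are orthonormal and the averaging estimate forces $\chi(g)=0$; Case 2 is exactly the paper's step (3). The genuine gap is precisely the step you flag as the ``main obstacle'': you never prove that $\chi|_R$ is pure, and the mechanism you propose for it does not transfer to this setting. The multiplicativity/weak-limit argument of Proposition \ref{PropositionCompressible} depends on the compressible-action structure (disjoint supports, elements $r_n$ commuting with larger and larger finite sets drawn from a base $\mathfrak U$); in the abstract setting of Theorem \ref{TheoremCharactersNonSimpleGroups}, hypothesis (ii) supplies none of that, and there is no reason the weak limits of $\pi(g_igg_i^{-1})$ should be central. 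So the proof is incomplete at its central point, and the suggested route to closing it would fail.

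The paper's proof shows that purity need not be established in advance. Decompose $H_R=\overline{Lin(\pi(R)\xi)}$ into the identity part $H_1$ and the regular part $H_2$ for $\pi|_R$, and write $\xi=\alpha^{1/2}\xi_1+(1-\alpha)^{1/2}\xi_2$ with unit vectors $\xi_i\in H_i$, \emph{allowing} $0<\alpha<1$. Then run your Case 1 argument on $\xi_2$ instead of on $\xi$: for $g\neq e$ and the conjugates $g_i$ from (ii), one has $(\pi(g_i)\xi_2,\pi(g_j)\xi_2)=\chi_{reg}(g_j^{-1}g_i)=0$, so $\pi(g_i)\xi_2\to 0$ weakly, while the trace property gives $(\pi(g_i)\xi_2,\xi_2)=tr(P_2\pi(g)P_2)$ independently of $i$; hence $(\pi(g)\xi_2,\xi_2)=0$ for every $g\neq e$. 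Therefore, if $H_2\neq\{0\}$, the cyclic subrepresentation of $G$ generated by $\xi_2$ is quasi-equivalent to the regular representation, and factoriality of $\mathcal M_\pi$ forces $\pi$ itself to be regular --- which yields $\alpha=0$ \emph{a posteriori}. If $H_2=\{0\}$ you are in your Case 2. In short, the dichotomy is obtained by applying your orthogonality argument to the component $\xi_2$ rather than by first proving that $\chi|_R$ is extreme.
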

\begin{proof}  Consider an indecomposable character $\chi$ of $G$. Let $(\pi,H,\xi)$ be the GNS-construction associated to $\chi$.

(1) Consider the restriction of $\pi$ onto the subgroup $R$. Set $H_{_R}=\overline{Lin(\pi(R)\xi)}$. Since the restriction of $\chi$ on $R$ is a character and the only indecomposable characters of the group $R$ are the regular and the identity characters, we can decompose the space $H_{_R}$ into  $R$-invariant subspaces $H_1$ and $H_2$ (possibly trivial) such that $H_{_R} = H_1 \bigoplus H_2$ with  $\pi(R)|H_1$ being the identity representation and
 $\pi(R)| H_2$ being the  regular representation.

  The orthogonal projections $\{P_i\}$ onto $H_i$, $i=1,2$ belong to the center of the algebra generated by $\pi(R)$. In particular, $P_i$ lies in the algebra $\mathcal{M}_\pi$. Furthermore, \[\chi(g)=\alpha\chi_{id}(g)+(1-\alpha)\chi_{reg}(g)\;\;\text{for
all}\;\;g\in R,\] where $\chi_{id}$ is the identity character,  $\chi_{reg}$ is the
regular character, and $\alpha\in [0,1]$. If $\alpha\neq 0,1$, we can write down the vector $\xi$ as
\begin{equation}\label{EqXi}\xi=\alpha^{\frac{1}{2}}\xi_1+(1-\alpha)^{\frac{1}{2}}\xi_2,\end{equation}
 where $\xi_1\in H_1$, $\xi_2\in H_2$ are unit vectors such that \[(\pi(h)\xi_1,\xi_1)=
 \chi_{id}(h)=1,\;\;(\pi(h)\xi_2,\xi_2)=
 \chi_{reg}(h)=\delta_{h,e}\;\;\text{for all}\;\;h\in R.\] For convenience, if $\alpha=0$, we set $\xi_1=0,\xi_2=\xi$, if $\alpha=1$, we set $\xi_1=\xi,\xi_2=0$.
Observe that $H_i = \overline{Lin(\pi(R)\xi_i)}$, $i=1,2$.

(2) Assume that $H_2\neq\{0\}$. Consider an arbitrary element $g\in G$, $g\neq e$. By our assumptions there exists a sequence of  elements $\{h_n\}\in R\setminus \{e\}$ such that $h_m^{-1} g^{-1}h_mh_n^{-1}gh_n\in R$ for all $n$ and $m$ and elements $h_n^{-1}gh_n$ are pairwise distinct. Set $g_m = h_m^{-1} gh_m$. Since $g_m^{-1}g_n \in R\setminus\{e\}$, we get that $$(\pi(g_n)\xi_2,\pi(g_m)\xi_2) =  \chi_{reg}(g_m^{-1}g_n) = 0.$$
This shows that $\pi(g_m)\xi_2\to 0$ weakly. Observe also that \begin{eqnarray*}(\pi(g_n)\xi_2,\xi_2) &=& (\pi(g_n)P_2\xi,P_2\xi)  = tr(P_2 \pi(h_n^{-1} g h_n)P_2)\\
& =& tr(\pi(h_n^{-1}) P_2 \pi(g) P_2 \pi(h_n)) = tr(P_2\pi(g)P_2).\end{eqnarray*} Since the latter is independent of $n$ and $\pi(g_n)\xi_2\to 0$, we conclude that $$(\pi(g)\xi_2,\xi_2) = tr(P_2\pi(g)P_2) = 0.$$
 Set $H_0 = \overline{Lin(\pi(G)\xi_2)}$.  Then $\pi(G)|H_0$ is quasi-equivalent to the regular representation.  Since $\pi$ is a factor representation, we conclude that $\pi$ is the regular representation.

(3) Assume that $H_2 = \{0\}$. Then $\xi=\xi_1$ and $\pi(h)=\id$ for every $h\in R$. Therefor, $\pi(g)=\id$ for all $g\in N(R)$. This means that the representation $\pi$ factors through the quotient $G/N(R)$ and defines a finite type factor representation $\omega$ of $G/N(R)$ such that
$\pi(g)=\omega([g])$ for all $g\in G$.
\end{proof}

 Recall that a finite factor representation of a group $G$ is of {\it type $I$} if the von Neumann algebra of the representation is isomorphic to the algebra of all linear operators in some finite-dimensional Hilbert space. We  say that an action of group $G$  on a measure space $(Y,\mu)$ is {\it trivial} if $g(x)=x$ for every $g\in G$ and $\mu$-almost every $x\in X$.  The following result shows that any ergodic action of a group admitting no  characters must be {\it essentially free}, that is $\mu(Fix(g))=0$ for all $g\in G\setminus\{e\}$.

\begin{theorem}\label{TheoremErgodicAction} Assume that every finite factor representation of a countable ICC group $G$ is either regular or of type $I$ and that there is at most a countable number (up to quasi-equivalence) of finite factor representations of $G$. Then every faithful ergodic measure-preserving action of $G$ is essentially free.
\end{theorem}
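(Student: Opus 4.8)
The plan is to reduce the statement to an analysis of the finite factor representations of $G$ via the character $\chi(g) = \mu(\mathrm{Fix}(g))$, and then use the classification hypothesis to show that this character cannot have a nontrivial type $I$ component. First I would verify that $\chi(g) = \mu(\mathrm{Fix}(g))$ is indeed a character in the sense of Definition \ref{DefinitionCharacter}: conjugation-invariance follows from measure-preservation, positive-definiteness is the standard computation using that $\mathrm{Fix}(g_i g_j^{-1}) \supseteq \mathrm{Fix}(g_i) \cap \mathrm{Fix}(g_j)$ (or, more cleanly, by realizing $\chi$ as a diagonal matrix coefficient of the Koopman-type representation on $\ell^2$-sections, i.e. $\chi(g) = (\pi(g)\mathbf{1}_X,\mathbf{1}_X)$ where $\pi$ acts on $L^2(X,\mu)$ by $(\pi(g)f)(x) = f(g^{-1}x)$ — actually one wants the representation on the groupoid $C^*$-algebra or on $L^2$ of the equivalence relation so that $\mathrm{Fix}$ appears), and $\chi(e) = \mu(X) = 1$.

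Next I would decompose $\chi$ into indecomposable characters. Since the space of characters is a Choquet simplex, write $\chi = \int \chi_\omega \, d\nu(\omega)$ over indecomposable characters; by the hypothesis there are only countably many indecomposable characters up to quasi-equivalence, each either regular or of type $I$ (finite-dimensional), so in fact $\chi = \sum_i c_i \chi_i$ is a countable convex combination where each $\chi_i$ is either the regular character $\chi_{\mathrm{reg}}$ or comes from a finite-dimensional representation. A finite-dimensional factor representation of an ICC group must be trivial in an appropriate sense — actually a type $I$ finite factor representation of any group is finite-dimensional irreducible, and for it to descend it factors through a finite quotient; combined with $G$ being ICC one shows the only possibility with $\chi_i(e)=1$ contributing is... here I must be careful: the type $I$ characters need not be the identity character, so instead I would argue directly. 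Suppose $c_0 > 0$ is the weight of a type $I$ piece $\chi_0$ coming from a finite-dimensional representation $\sigma$ of $G$; then $\chi_0$ is almost periodic (its values lie in a compact set and $g \mapsto \chi_0(g)$ is a matrix coefficient of a finite-dimensional unitary rep), so in particular $|\chi_0(g)| $ does not tend to $0$ along suitable sequences.

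The key step — and the main obstacle — is to show that $\chi(g) = \mu(\mathrm{Fix}(g))$ can have \emph{no} type $I$ component, i.e. $\chi$ is a multiple of $\chi_{\mathrm{reg}}$, forcing $\mu(\mathrm{Fix}(g)) = 0$ for $g \neq e$; and the faithfulness and ergodicity hypotheses must be used exactly here. The idea: if $\chi$ had a finite-dimensional component, then there would exist $g\neq e$ with $\mu(\mathrm{Fix}(g))$ bounded away from $0$ along an infinite family of distinct conjugates $\{h_i g h_i^{-1}\}$ (using ICC to get infinitely many distinct conjugates) all having the same measure of fixed-point set; but the sets $\mathrm{Fix}(h_i g h_i^{-1}) = h_i \mathrm{Fix}(g)$ have constant positive measure, and one extracts from ergodicity that their "independence defect" forces a contradiction — more precisely, consider the sub-$\sigma$-algebra or the $G$-invariant function $x \mapsto $ (density of $\{i : x \in h_i\mathrm{Fix}(g)\}$); ergodicity makes it constant, and faithfulness prevents it from being $\mu(\mathrm{Fix}(g)) > 0$ everywhere unless $g$ acts trivially a.e., contradicting faithfulness. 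Packaging this cleanly is the delicate part: the cleanest route is probably to invoke that the GNS representation $\pi_\chi$ attached to $\chi$ is quasi-equivalent to the Koopman representation of the action restricted to the appropriate invariant subspace, note that a type $I$ summand of $\pi_\chi$ would produce a finite-dimensional $G$-invariant subspace of $L^2(X,\mu)$ consisting of non-constant functions (the constants give the trivial character, which corresponds to the fixed-point-free-complement), and then use ergodicity to rule out finite-dimensional invariant subspaces other than the constants, together with faithfulness to rule out the identity character appearing with the "wrong" multiplicity. Once $\chi = \chi_{\mathrm{reg}}$ (equivalently $\mu(\mathrm{Fix}(g)) = 0$ for all $g \neq e$), essential freeness is immediate.
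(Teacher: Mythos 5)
Your overall strategy matches the paper's: realize $\chi(g)=\mu(\mathrm{Fix}(g))$ as the diagonal matrix coefficient of the representation of $G$ on $L^2$ of the measured orbit equivalence relation (the Feldman--Moore construction), use the countability hypothesis to split this representation into an at most countable sum of finite factor representations, and then rule out any non-regular (hence, by hypothesis, type $I$) summand. However, the key step --- actually deriving a contradiction from the presence of a type $I$ summand --- is not carried out, and neither of the two mechanisms you propose for it works. Ergodicity does \emph{not} rule out finite-dimensional $G$-invariant subspaces of $L^2(X,\mu)$ other than the constants (an ergodic irrational rotation has an orthonormal basis of eigenfunctions spanning one-dimensional invariant subspaces); ergodicity only controls invariant \emph{vectors}. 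And the ``independence defect'' of the sets $h_i(\mathrm{Fix}(g))$ is never made precise: countably many subsets of a probability space, each of measure $c>0$, do not by themselves yield a contradiction, and it is not explained where the type $I$ hypothesis enters that count. You explicitly flag this step as ``the delicate part,'' which is exactly right --- it is the entire content of the theorem.

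The paper's argument at this point is concrete and different from anything you sketch. Let $\xi$ be the indicator of the diagonal in $L^2(\widetilde Y,\widetilde\mu)$, let $P_1$ be the central projection onto a non-regular (finite-dimensional) summand, and set $\xi_1=P_1\xi$. The right translations $\pi'(g)$ lie in the commutant and satisfy $\pi'(g)\pi(g)\xi=\xi$, hence $\pi'(g)\pi(g)\xi_1=\xi_1$; therefore $x\mapsto|\xi_1(x,x)|$ is $G$-invariant and, by ergodicity, equal to a constant $C$, which is nonzero because $(\xi_1,\xi)\neq0$. Expanding $\pi(g)\xi_1$ in a finite orthonormal basis $\eta_1,\dots,\eta_n$ of the summand and evaluating at points $(x,y)$ with $x=gy$ gives $\sum_j|\eta_j(x,y)|\geq C>0$ everywhere on $\widetilde Y$, which has infinite measure (faithfulness together with ICC forces infinite orbits); this contradicts $\eta_j\in L^2(\widetilde Y,\widetilde\mu)$. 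Some argument of this kind --- exploiting both the finite dimensionality and the infinite total mass of the equivalence relation --- is what your proposal is missing.
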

\begin{proof} Consider an ergodic action of $G$ on a measure space $(Y,\mu)$.  Set $$\widetilde{Y}=\{(x,y)\in Y\times Y | x=g(y)\;\text{for some}\;g\in G\}.$$ For a Borel set $A\subset \widetilde Y$ and a point $x\in Y$, set $A_x =\{(x,y)\in A\}$.  Define a $\sigma$-finite measure $\widetilde \mu$ on $\widetilde Y$ by $\widetilde \mu(A) = \int_Y card(A_x)d\mu(x)$. Given a function $f\in L^2(\widetilde Y,\widetilde \mu)$ and a group element $g\in G$, set $$(\pi(g)f)(x,y)=f(g^{-1}x,y).$$

Then $\pi(g)$ is a unitary operator on the Hilbert space $L^2(\widetilde Y,\widetilde \mu)$.
Denote by $\xi$ the indicator function of the diagonal of $Y\times Y$. Set $H=\overline{Lin\{\pi(G)\xi\}}$. We note the  von Neumann algebra  $\mathcal{M}_\pi$  generated by $\pi(G)$, restricted to $H$, is of  finite type. We refer the reader to \cite{Feldman_Moore:I} for the details.
Since the group $G$ has at most a countable number of finite factor representations, the representation $\pi$ decomposes into a direct sum (at most countable) of factor representations.

Our goal is to show that the representation $\pi$ is regular. Then the uniqueness of the trace implies that  $(\pi(g)\xi,\xi)=0$ for every $g\neq e$. Using the identity $\mu(Fix(g)) = (\pi(g)\xi,\xi)$, we get that the action is essentially free.

 Suppose to the contrary that  the decomposition of $\pi$ into factors contains a {\it non-regular} factor representation $\pi_1$, which, by our assumptions,  generates  a finite-dimensional  von Neumann factor.
Let $P_1$ be a projection from the center of $\mathcal M_\pi$ such that $\pi_1(g)=P_1\pi(g)$ for every $g\in G$. Set $\xi_1=P_1\xi$.

Since for every $g\in G$ the unitary operator $(\pi'(g)f)(x,y)=f(x,g^{-1}y)$ belongs to $\mathcal M_\pi'$ and $\pi'(g)\xi=\pi(g^{-1})\xi$, we have that $$\pi'(g)\pi(g)\xi_1 = \pi'(g)\pi(g)P_1\xi = P_1\pi'(g)\pi(g)\xi =  P_1\xi = \xi_1$$ for all $g\in G$.  This implies that the function  $h(x):=|\xi_1(x,x)|$ is $G$-invariant and $\mu$-integrable on $Y$. By the ergodicity, we get that $h(x) \equiv C$ on $Y$ for some constant $C$.  Note that if $C=0$, then  $$0 = \int_{\widetilde Y} \xi_1(x,y)\xi(x,y)d\widetilde \mu(x,y) = (\xi_1,\xi),$$ which is impossible as the projection of $\xi$ onto $\xi_1$ is non-trivial.

 Fix an orthonormal basis $\eta_1,\ldots,\eta_n$ for  $\overline{Lin\{\pi_1(G)\xi_1\}}$. For a given $g\in G$, write
$$\pi_1(g)\xi_1=\sum\limits_{j=1}^n\alpha_j(g)\eta_j$$ for some $\alpha_1(g),\ldots,\alpha_n(g)$ with $\sum|\alpha_j(g)|^2=|\xi_1|^2\le 1$. Observe that $$\sum\limits_{j=1}^n\alpha_j(g)\eta_j(x,y)=(\pi_1(g)\xi_1)(x,y)=
(\pi(g)\xi_1)(x,y)=\xi_1(g^{-1}x,y)$$ for every $(x,y)\in \widetilde Y$.  Since $|\xi_1(g^{-1}x,y)| = C$ for $(x,y)\in \widetilde Y$ with $x= g(y)$, we conclude that $\sum\limits_{j=1}^n|\eta_j(x,y)|\ge C >0$ for $(x,y)$ with $x = gy$ and, thus, for any $(x,y)\in\widetilde Y$. This implies that  the function $\sum\limits_{j=1}^n|\eta_j(x,y)|$ is not integrable with respect to $\widetilde \mu$. This contradiction yields that $\pi_1 = 0$ and, thus,  the representation $\pi$  is regular. \end{proof}

We finish this section by giving examples of groups admitting no compressible actions. We observe that even though the following proposition yields a  result similar to that of  Theorem \ref{TheoremCharactersNonSimpleGroups}, the underlying assumptions are different and not mutually interchangeable.

\begin{proposition} Let $G$ be a  countable group with trivial center and such that every proper quotient  is  finite or abelian. Assume that the group $G$ admits a compressible action on a regular Hausdorff space. Then all finite (Murray von Neumann) non-regular representations of $G$ are of type $I$.
\end{proposition}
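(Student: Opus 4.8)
The plan is to reduce this statement to Theorem~\ref{TheoremCharactersNonSimpleGroups} (or rather to re-run its proof) by combining the compressibility hypothesis with Proposition~\ref{PropositionCompressible}. Let $\chi$ be a non-regular indecomposable character of $G$ and let $(\pi, H, \xi)$ be its GNS triple. By Proposition~\ref{PropositionCompressible}, since the action of $G$ is compressible, there exists $g \neq e$ with $|\chi(g)| = 1$. As in the proof of Lemma~\ref{LemmaIdentityCharacter}, the Cauchy--Schwarz inequality together with the fact that $\xi$ is separating forces $\pi(g) = \chi(g)\,\id$, so $\pi(g)$ is a central unitary of $\mathcal M_\pi$; since $\mathcal M_\pi$ is a factor this is automatic, but the point is that $g$ acts as a scalar. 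Let $K = \{ h \in G : \pi(h) \in \mathbb{T}\,\id \}$. Then $K$ is a normal subgroup of $G$ containing $g$, and the quotient map $\pi$ descends modulo the part of $K$ that acts trivially.

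The key step is then to analyze the quotient $G/K$. First I would show $K$ is nontrivial (it contains $g \neq e$) and that $K$ is not all of $G$: if $\pi(h) \in \mathbb T\,\id$ for every $h$, then $\chi(g) = \rho([g])$ for a homomorphism $\rho$ into $\mathbb T$, i.e.\ $\pi$ is one-dimensional, hence of type $I$, and we are done. So assume $K$ is a proper nontrivial normal subgroup. Since $G$ has trivial center, $K$ is not central, so $K \not\subset Z(G)$; but more importantly $G/K$ is a proper quotient, hence finite or abelian by hypothesis. If $G/K$ is abelian, then $G' \subseteq K$, so every commutator acts as a scalar; taking a commutator $[h_1, h_2]$ with $\pi(h_i) = c_i\,\id \cdot(\text{something})$ — more carefully, for $h \in K$ write $\pi(h) = \phi(h)\id$ with $\phi: K \to \mathbb T$ a homomorphism, and observe $\phi$ must be $G$-invariant (conjugation by any $g \in G$ fixes $\pi(h)$ up to the same scalar since $\pi(ghg^{-1})$ and $\pi(h)$ are conjugate scalars, hence equal). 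Then $\pi$ restricted to $K$ is the scalar representation $\phi$, and $\pi$ factors through a representation of $G$ on which $K$ acts by the character $\phi$; twisting by $\phi$ we may assume $K \subseteq \ker \pi$, so $\pi$ factors through $G/K$, which is finite or abelian — and a finite factor representation of a finite or abelian group generates a finite-dimensional factor, i.e.\ is of type $I$.

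The main obstacle I anticipate is the case $G/K$ finite (as opposed to abelian): there one must rule out that $\pi$, pulled back from $G/K$, is the regular representation of $G$ — but it cannot be, since $\pi$ factors through a finite group and therefore $\mathcal M_\pi$ is finite-dimensional, so $\pi$ is automatically of type $I$ and "non-regular" (as $G$ is ICC, its regular representation generates a $II_1$ factor, infinite-dimensional). A second subtlety is justifying that $\phi : K \to \mathbb T$ is well-defined and $G$-invariant: well-definedness uses that $\pi$ is a \emph{factor} representation, so the only central unitaries are scalars, hence $\pi(h)$ for $h \in K$ is a genuine scalar (not merely a central element), and multiplicativity of $\phi$ is then immediate; $G$-invariance follows because $\pi(ghg^{-1}) = \pi(g)\pi(h)\pi(g)^{-1} = \pi(h)$ for scalar $\pi(h)$. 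Once these are in place, twisting $\pi$ by the one-dimensional representation $\phi$ of $G/K'$—or more simply, invoking that $\pi|_K$ scalar plus $\chi$ indecomposable forces $\pi$ to be a scalar times a representation of $G/K$—finishes the argument. I would present it in three short steps: (1) produce $g$ with $\pi(g)$ scalar via Proposition~\ref{PropositionCompressible} and Lemma~\ref{LemmaIdentityCharacter}'s argument; (2) show the normal subgroup $K$ on which $\pi$ is scalar is proper and nontrivial, handling the degenerate cases; (3) conclude $\pi$ factors (after a scalar twist) through the finite or abelian group $G/K$, whence it is type $I$.
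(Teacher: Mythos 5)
Your first step coincides with the paper's: Proposition \ref{PropositionCompressible} gives $g\neq e$ with $|\chi(g)|=1$, and the separating-vector argument from Lemma \ref{LemmaIdentityCharacter} yields $\pi(g)=\chi(g)\id$. After that your route diverges, and the divergence contains a genuine gap. You pass to $K=\{h\in G:\pi(h)\in\mathbb T\,\id\}$ and, in the case $G/K$ abelian, you try to ``twist by $\phi$'' so as to arrange $K\subseteq\ker\pi$. This does not work as stated: $\phi$ is a homomorphism defined only on $K$, and to twist $\pi$ you would need to extend it to a homomorphism $G\to\mathbb T$, which need not exist (for instance $\phi$ may be nontrivial on $G'\cap K$, while every character of $G$ kills $G'$). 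What $\pi$ actually induces on $G/K$ is only a \emph{projective} representation, and a projective factor representation of an abelian group can perfectly well generate a $II_1$ factor: the relation $UV=e^{2\pi i\theta}VU$ with $\theta$ irrational is a projective representation of $\mathbb Z^2$ whose weak closure is the hyperfinite $II_1$ factor. So in the abelian case your argument does not deliver type $I$. (Your finite case survives, since even a projective representation of a finite group spans a finite-dimensional algebra.) A smaller point: your parenthetical ``since $\mathcal M_\pi$ is a factor this is automatic'' is backwards --- that $\pi(g)$ is central is not automatic but is precisely what the Cauchy--Schwarz/separating-vector argument establishes.

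The missing idea, which is how the paper proceeds, is to produce a nontrivial element of the honest kernel of $\pi$ rather than of $K$. Since $Z(G)$ is trivial, the element $g$ with $\pi(g)=c\,\id$ is not central, so there is $h\in G$ with $s=ghg^{-1}h^{-1}\neq e$; because $\pi(g)$ is a scalar it commutes with $\pi(h)$, whence $\pi(s)=\id$, and therefore $\pi$ is trivial on the entire normal closure $N$ of $s$. Thus $\pi$ factors through an ordinary (not projective) representation of the proper quotient $G/N$, which by hypothesis is finite or abelian, and every finite factor representation of a finite or abelian group is of type $I$. Grafting this step onto your setup repairs the proof.
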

\begin{proof} Consider a non-regular indecomposable character $\chi$ of $G$. Let $(\pi,H,\xi)$ be the GNS-construction associated to $\chi$. By Proposition \ref{PropositionCompressible} there exists $g\neq e$ such that $|\chi(g)|=1$. Choose $h\in G$  not commuting with $g$. Denote by $N$ the normal subgroup of $G$ generated by the element $ghg^{-1}h^{-1}$. Using the arguments from the proof of Lemma \ref{LemmaIdentityCharacter} we obtain that $\pi|N=\id$. Thus, the representation $\pi$ of the group $G$ gives rise to the representation of $G/N$ with the same von Neumann algebra. Recall that factor representations of abelian groups are scalar.
\end{proof}

If a group $G$ as in the proposition above has a measure-preserving  action on a measures space $(X,\mu)$  with $0<\mu(Fix(g))<1$ for some $g\neq e$, then, in view of Theorem \ref{TheoremErgodicAction}, such a group cannot have compressible actions. Examples of such groups are  full groups of even Bratteli diagrams, commutators of topological full groups of Cantor minimal  systems \cite{Dudko_Medynets:2012}, and just infinite branch groups   \cite{Grigorchuk:2011}.


\section{Applications}\label{Applications}

In this section we show that the results established  in the previous section  are applicable to the Hignam-Thompson groups and to the full groups of irreducible shifts of finite type.

\subsection{The Higman-Thompson groups}

\begin{definition} Fix two positive integers $n$ and $r$. Consider an interval $I_r = [0,r]$. Define the group $F_{n,r}$ as the set of all orientation preserving piecewise linear homeomorphisms $h$ of $I_r$ such that all singularities of $h$ are in $\mathbb Z[1/n]=\{\tfrac{p}{n^k}:p,k\in\mathbb N\}$;
the derivative of $h$ at any non-singular point is $n^k$ for some $k\in \mathbb Z$.

\end{definition}

Observe that the commutator subgroup of $F_{n,r}$ is a simple group and the abelianization of $F_{n,r}$ is isomorphic to $\mathbb Z^n$ \cite[Section 4]{Brown:1987}.   Consider the subgroup $F_{n,r}^0$ of $F_{n,r}$ consisting of all elements $f\in F_{n,r}$ with $ supp(f)$ being  a  subset of $(0,r)$. Observe that (the commutator subgroup) $F_{n,r}' = (F_{n,r}^0)'$ \cite[Section 4]{Brown:1987}.
The following lemma shows that the commutator  subgroup $F_{n,r}'$ satisfies the assumptions of Theorem \ref{TheoremNoCharactersSimpleGroups}.

\begin{lemma}\label{LemmaRUConditions} The base of topology $\mathfrak{U}\,=\{(a,b):[a,b]\subset (0,r),a,b\in\mathbb Z[\tfrac{1}{n}]\}$ satisfies the conditions (i)-(iv) of Definition \ref{DefinitionCompressible} for the action of the group $R = (F_{n,r}^0)'$ on $(0,r)$. Thus, the action of $R$ is compressible.
\end{lemma}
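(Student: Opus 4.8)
The plan is to verify the four conditions (i)--(iv) of Definition \ref{DefinitionCompressibleBase} directly for the collection $\mathfrak{U} = \{(a,b) : [a,b]\subset(0,r),\ a,b\in\mathbb{Z}[\tfrac1n]\}$, using the fact that the simple group $R=(F_{n,r}^0)'$ acts highly transitively on dyadic-type subintervals of $(0,r)$. The one genuinely structural input I will invoke is the standard description of $F_{n,r}'$: an element of $F_{n,r}$ lies in the commutator subgroup if and only if it is trivial in a neighborhood of each endpoint $0$ and $r$; equivalently, $F_{n,r}' = (F_{n,r}^0)'$ consists of those $f\in F_{n,r}$ that are the identity near $0$ and near $r$ (this is Brown's computation of the abelianization, cited just above the lemma). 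In particular, for \emph{any} two standard intervals $(a,b),(c,d)\in\mathfrak{U}$ there is an element of $R$ carrying $[a,b]$ affinely onto $[c,d]$ by a map supported in a slightly larger interval $[a',d']\subset(0,r)$ and equal to the identity near $0$ and $r$ — this is the basic ``transport lemma'' for Thompson-type groups, proved by an explicit piecewise-linear interpolation with slopes powers of $n$, and it is the workhorse for everything below.

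Condition (iv) is immediate: given $(a_1,b_1),(a_2,b_2)\in\mathfrak{U}$, take $U_3=(\min(a_1,a_2),\max(b_1,b_2))$, which is again a standard interval contained in $(0,r)$ and contains both. Condition (i) holds because every $g\in R$ has $\supp(g)\subset(0,r)$ a compact subset (being the closure of the set of non-fixed points of a PL homeomorphism that is the identity near the endpoints), so it is covered by some standard interval $U\in\mathfrak{U}$ after enlarging slightly to dyadic endpoints. Condition (ii) is exactly the transport lemma: given $U_1,U_2\in\mathfrak{U}$, first map $\overline{U_1}$ into $U_2$ by the explicit PL element carrying $\overline{U_1}$ onto a standard subinterval compactly contained in $U_2$; if the needed element only lies in $F_{n,r}^0$ rather than its commutator, precompose/postcompose to land in $R$, or simply observe that such a ``compression'' map, being supported on a standard interval away from $0$ and $r$, can be taken to be a commutator of two such maps.

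The main work is condition (iii): given $U_1,U_2,U_3\in\mathfrak{U}$ with $\overline{U_1}\cap\overline{U_2}=\varnothing$, I must produce $g\in R$ with $g(U_1)\cap U_3=\varnothing$ and $\supp(g)\cap U_2=\varnothing$. Since $\overline{U_1}$ and $\overline{U_2}$ are disjoint compact subintervals of $(0,r)$, they lie in disjoint open standard intervals $W_1\supset\overline{U_1}$ and $W_2\supset\overline{U_2}$ with $\overline{W_1}\cap\overline{W_2}=\varnothing$ and $\overline{W_1}\subset(0,r)$. Inside $W_1$ (a copy of a Thompson-group domain, with endpoints pushed slightly away from $0,r$), choose a standard subinterval $W_1'$ that is disjoint from $U_3$ — this is possible because $W_1$ contains infinitely many pairwise disjoint standard subintervals, at least one of which avoids the compact set $\overline{U_3}\cap W_1$ unless $U_3\supset W_1$, in which case I first shrink $W_1$ around $\overline{U_1}$; actually one must be slightly careful when $U_3$ covers a neighborhood of $\overline{U_1}$, so the cleaner route is: pick $W_1'\subset W_1$ a standard interval with $\overline{U_1}\subset W_1$, then apply the transport lemma \emph{inside} $W_1$ to get $g$ supported in $W_1$ moving $\overline{U_1}$ into any prescribed standard subinterval of $W_1$; since $W_1$ has positive ``room,'' I can choose the target subinterval to avoid $\overline{U_3}$ (if $W_1\subset U_3$ this fails, so at the outset choose $W_1$ small enough that $W_1\not\subset U_3$, which is possible exactly when $\overline{U_1}\neq$ a point blocking this — and $\overline{U_1}$ has dyadic-rational, hence non-trivial, complement in any containing interval, so some standard $W_1\supset\overline{U_1}$ with $W_1\setminus \overline{U_3}\neq\varnothing$ exists). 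Then $g$ supported in $W_1$ satisfies $\supp(g)\subset W_1$, which is disjoint from $W_2\supset U_2$, giving $\supp(g)\cap U_2=\varnothing$, and $g(U_1)\subset W_1'\subset W_1\setminus\overline{U_3}$, giving $g(U_1)\cap U_3=\varnothing$. Finally, to ensure $g\in R=(F_{n,r}^0)'$ rather than merely $F_{n,r}^0$: any PL homeomorphism supported in a standard interval $W_1$ compactly contained in $(0,r)$ can be written as a commutator of two elements likewise supported near $W_1$ (split $W_1$ and use a standard ``doubling'' trick), so $g$ can be arranged to lie in the commutator subgroup. I expect this bookkeeping around condition (iii) — choosing the intermediate intervals so that all three constraints (avoid $U_3$, stay off $U_2$, remain a commutator in the simple group) hold simultaneously — to be the only real obstacle; conditions (i), (ii), (iv) are routine.
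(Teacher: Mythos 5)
Your overall plan --- verify (i)--(iv) directly, with (i) and (iv) routine and (ii)--(iii) handled by a ``transport'' element that is piecewise linear with slopes in $n^{\mathbb Z}$ and is the identity near $0$ and $r$ --- is the same as the paper's. For (iii) the paper's bookkeeping is simpler than yours: assuming without loss of generality that $U_2=(a_2,b_2)$ lies to the left of $U_1=(a_1,b_1)$, it transports $U_1$ onto an interval $(c,d)$ with $\max\{b_2,b_3\}<c<d<r$ by an element supported in $[x,y]\subset(b_2,r)$; this makes $g(U_1)\cap U_3=\varnothing$ and $\supp(g)\cap U_2=\varnothing$ hold simultaneously and avoids your case analysis about whether the window $W_1$ is swallowed by $U_3$ (there is always room to the right of $b_3$ because $[a_3,b_3]\subset(0,r)$).

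The genuine gap is your structural input about $R$. You assert that $(F_{n,r}^0)'$ consists of all elements of $F_{n,r}$ that are the identity near $0$ and near $r$, i.e.\ that $F_{n,r}^0$ is perfect, and you later lean on the claim that any PL homeomorphism supported in a compact subinterval of $(0,r)$ can be written as a commutator via a ``doubling trick.'' Both claims fail for $n\ge 3$: the paper records that $F_{n,r}/F_{n,r}'\cong\mathbb Z^n$, whereas the germs at the two endpoints detect only a $\mathbb Z^2$ quotient, so $F_{n,r}^0/(F_{n,r}^0)'\cong\mathbb Z^{n-2}\neq 0$ and $F_{n,r}^0$ is not perfect. (The doubling trick also cannot run inside $F_{n,r}$: the infinite product of conjugates it requires has infinitely many breakpoints and leaves the group.) So your mechanism for guaranteeing that the transport element lies in $R$ rather than merely in $F_{n,r}^0$ collapses except when $n=2$, and this is exactly the delicate point of the lemma. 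The paper handles it by writing down a completely explicit five-piece map $g$ (linear on $[0,x],[x,a],[a,b],[b,y],[y,r]$ with all slopes powers of $n$) and asserting $g\in R$; to repair your version you would need either to check that your transport maps die in $F_{n,r}^0/(F_{n,r}^0)'$ or to realize the required motions of intervals directly as products of commutators of transport maps --- work your sketch defers rather than does. Your treatments of (i) and (iv) are fine as written.
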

\begin{proof}
The conditions (i) and (iv) of Definition \ref{DefinitionCompressible} are clearly satisfied.

To check the condition (ii), consider intervals $U_1=(a,b)$ and $U_2=(c,d)$ both in $\mathfrak{U}$. Replacing $U_2$ by a subinterval if necessary we may assume that $\tfrac{b-a}{d-c}=n^k$ for some $k\in\mathbb Z$. Since the function $\tfrac{a-x}{c-x}$ is continuous in $x$ for $x\neq c$, we can find $x\in \mathbb Z[\tfrac{1}{n}]$ such that $0<x<\min\{a,c\}$ and $\tfrac{a-x}{c-x}=n^k$ for some $k\in \mathbb Z$.  Similarly, we can find $y\in\mathbb Z[\tfrac{1}{n}]$, $\max\{b,d\}<y<r$  such that $\tfrac{y-b}{y-d}=n^k$ for some $k\in\mathbb Z$. Let $g:[0,r]\to [0,r]$ be the function such that
\[g(0)=0,\;g(x)=x,\;g(a)=c,\;g(b)=d,\;g(y)=y,\;g(r)=r,\] and $g$ is linear on each of the line segments $[0,x],[x,a],[a,b],[b,y],[y,r]$. Then $g\in R$ and $g(U_1)=U_2$.

To check the condition (iii), we consider intervals  $U_i=(a_i,b_i)$, $i=1,2,3$ from $\mathfrak U$ such that $\overline{U}_1\cap \overline{U}_2= \emptyset$. Without loss of generality, assume that $a_1>b_2$. Set $a=a_1,b=b_1$ and fix $c,d\in\mathbb Z[\tfrac{1}{n}]$ with $\max\{b_2,b_3\}<c<d<r$ and $\tfrac{b-a}{d-c}=n^k$ for some $k\in\mathbb Z$. Construct $g$ as above with $a_1>x>b_2$ and $r>y>d$. Then $\supp(g)\subset [x,y]$ and $g(U_1)=(c,d)$. Therefore, $g(U_1)\cap U_3=\varnothing$ and  $\supp(g)\cap U_2=\varnothing$.
  \end{proof}

Observe that all finite factor representations of abelian groups are scalar representations, i.e. $\pi(g) = c_g\id$, with $c_g\in \mathbb T$, the unit circle.  In particular, the indecomposable characters of abelian groups are homomorphisms into  $\mathbb T$.

\begin{corollary}\label{CorollaryFnr} (1) The group $F_{n,r}'$ has no proper characters.
 (2) If $\chi$ is an indecomposable character of $F_{n,r}$, then $\chi$ is either regular or $\chi(g) = \rho([g])$, where $[g]$ is the image of $g$ in the abelianization of $F_{n,r}$ and $\rho: \mathbb Z^n \rightarrow \mathbb T$ is a group homomorphism.
\end{corollary}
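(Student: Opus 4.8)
\emph{Proof plan.}
For part~(1) I would simply check that $F_{n,r}'$ meets the hypotheses of Theorem~\ref{TheoremNoCharactersSimpleGroups}. It is countable, it is simple by \cite[Section~4]{Brown:1987}, and since $F_{n,r}'=(F_{n,r}^0)'$, Lemma~\ref{LemmaRUConditions} exhibits a compressible action of $F_{n,r}'$ on the open interval $(0,r)$, an infinite regular Hausdorff space. Theorem~\ref{TheoremNoCharactersSimpleGroups} then yields~(1) immediately.

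For part~(2) the plan is to apply Theorem~\ref{TheoremCharactersNonSimpleGroups} with $G=F_{n,r}$ and $R=F_{n,r}'$, and then identify the quotient. Hypothesis~(i) of that theorem asks that $R$ be ICC with no proper characters: the latter is part~(1), and an infinite simple group is automatically ICC (a nontrivial element with finite conjugacy class would have a finite-index centralizer, and the intersection of its $G$-conjugates would be a proper finite-index normal subgroup, impossible in an infinite simple group). Hypothesis~(ii) is the substantive point. The key observation is that $R=F_{n,r}'$ is exactly the kernel of the abelianization homomorphism $F_{n,r}\to\mathbb Z^n$, and all conjugates of a fixed $g$ have the same image in $\mathbb Z^n$; hence for any $R$-conjugates $g_i,g_j$ of $g$ the element $g_i^{-1}g_j$ automatically lies in $R$. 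Thus~(ii) reduces to producing, for each $g\neq e$, infinitely many pairwise distinct elements of $C_R(g)$.

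To build these, fix $g\in F_{n,r}\setminus\{e\}$, choose $x_0\in(0,r)$ with $g(x_0)\neq x_0$, and (using density of $\mathbb Z[1/n]$ and continuity of $g$) pick an interval $J=(a,b)$ with $a,b\in\mathbb Z[1/n]$, $[a,b]\subset(0,r)$ and $g([a,b])\cap[a,b]=\varnothing$. Since $R$ is infinite and acts compressibly on $(0,r)$ (Lemma~\ref{LemmaRUConditions}), a fixed nontrivial element of $R$ can be conjugated inside $R$ so as to have support in any prescribed $n$-adic subinterval of $J$; taking a sequence of pairwise disjoint such subintervals and multiplying initial segments gives infinitely many distinct $h\in R$ with $\supp(h)\subset J$. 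If $h\neq h'$ belong to this family and $hgh^{-1}=h'gh'^{-1}$, then $t:=h'^{-1}h\neq e$ commutes with $g$ and is supported in $J$, so $t$ moves some $x\in J$ and $t(g(x))=g(t(x))$; but $g(x)\notin\overline J\supseteq\supp(t)$, so $t(g(x))=g(x)$, and injectivity of $g$ forces $t(x)=x$, a contradiction. Hence $h\mapsto hgh^{-1}$ is injective on this family, which supplies the required sequence $\{g_i\}\subset C_R(g)$.

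With (i) and (ii) verified, Theorem~\ref{TheoremCharactersNonSimpleGroups} gives that every finite type factor representation $\pi$ of $F_{n,r}$ is either regular or of the form $\pi(g)=\omega([g])$ for a finite factor representation $\omega$ of $F_{n,r}/N(R)$. Since the commutator subgroup is normal, $N(R)=F_{n,r}'$, so the quotient is the abelianization $\mathbb Z^n$; a finite factor representation of an abelian group is scalar, given by a homomorphism $\rho\colon\mathbb Z^n\to\mathbb T$, whence $\pi(g)=\rho([g])\id$. Passing through the GNS correspondence between indecomposable characters and finite type factor representations then translates this into the stated dichotomy for $\chi$. I expect the main obstacle to be the verification of hypothesis~(ii) of Theorem~\ref{TheoremCharactersNonSimpleGroups} --- concretely, constructing enough small-support elements of $F_{n,r}'$ inside $J$ and running the distinctness argument for their conjugates --- with the reduction via the kernel of the abelianization being what keeps this elementary.
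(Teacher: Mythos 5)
Your proposal is correct and follows essentially the same route as the paper: part (1) by combining Lemma \ref{LemmaRUConditions} with Theorem \ref{TheoremNoCharactersSimpleGroups}, and part (2) by verifying hypothesis (ii) of Theorem \ref{TheoremCharactersNonSimpleGroups} using conjugators supported in an interval $(a,b)$ disjoint from its $g$-image, with $g_i^{-1}g_j\in F_{n,r}'$ because conjugates share the same image in the abelianization. You actually supply details the paper leaves implicit (the ICC property of $F_{n,r}'$ and the pairwise distinctness of the conjugates $hgh^{-1}$), which is a welcome tightening rather than a deviation.
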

\begin{proof} Statement (1) immediately follows from Lemma \ref{LemmaRUConditions} and Theorem  \ref{TheoremNoCharactersSimpleGroups}.

 To establish the second result, we only need to check the condition (2) of  Theorem \ref{TheoremCharactersNonSimpleGroups}. Fix $g\in F_{n,r}\setminus \{e\}$. Find an interval $(a,b)$ with $g(a,b)\cap (a,b)= \emptyset$. Find a sequence of distinct elements $\{h_n\}_{n\geq 1}\subset (F_{n,r})'$ supported by $(a,b)$. Then $(h_n^{-1} g^{-1} h_n )(h_m^{-1}g h_m)\in (F_{n,r})'$ for any $n\neq m$. This completes the proof.
\end{proof}


\begin{definition} Let $n$ and $r$ be positive integers. Define Higman's group $G_{n,r}$ as the group of all right continuous bijections of $[0,r)$ which are piecewise linear, with finitely many discontinuities  and singularities, all in $\mathbb Z[1/n]$, slopes in $\{n^k : k\in\mathbb Z\}$, and mapping $\mathbb Z[1/n]\cap [0,r)$   to itself.
\end{definition}

Note that $F_{n,r}\subset G_{n,r}$. In fact, $F_{n,r}$ consists exactly of all continuous elements $g\in G_{n,r}$. In \cite{Higman:1974} Higman showed that the commutator subgroup $G_{n,r}\rq{}$ is simple and that the abelianization of $G_{n,r}$ is trivial for even $n$ and is $\mathbb Z/2\mathbb Z$ for odd $n$.

\begin{lemma}\label{LemmaConditionsOnGroups} The groups $R = F_{n,r}'$ and $G = G_{n,r}$ satisfy the conditions of Theorem \ref{TheoremCharactersNonSimpleGroups}.
\end{lemma}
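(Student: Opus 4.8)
The plan is to verify the two hypotheses of Theorem \ref{TheoremCharactersNonSimpleGroups} with $R = F_{n,r}'$ and $G = G_{n,r}$, reusing the machinery already assembled. For hypothesis (i), that $R$ has no proper characters, I would simply invoke Corollary \ref{CorollaryFnr}(1), which states exactly this; since $R = F_{n,r}'$ is the simple commutator subgroup and its compressible action on $(0,r)$ was established in Lemma \ref{LemmaRUConditions}, Theorem \ref{TheoremNoCharactersSimpleGroups} already gives this for free. I should also note that $R$ is ICC: any nontrivial $g\in F_{n,r}'$ has an interval moved off itself, and conjugating by the uncountably many — or at least infinitely many — elements of $F_{n,r}'$ supported on a small interval containing $\supp(g)$ produces infinitely many distinct conjugates (this is the same mechanism used repeatedly elsewhere in the paper).

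The substantive point is hypothesis (ii): for every $g\in G_{n,r}\setminus\{e\}$ there must exist a sequence of distinct elements $\{g_i\}\subset C_R(g) = \{h g h^{-1} : h\in F_{n,r}'\}$ with $g_i^{-1} g_j \in R$ for all $i,j$. The natural approach mirrors the proof of Corollary \ref{CorollaryFnr}(2): since $g\neq e$ and $g$ acts on $[0,r)$, I want to find a subinterval $(a,b)\subset(0,r)$ with $a,b\in\mathbb Z[1/n]$ such that $g((a,b))\cap (a,b)=\varnothing$. Then I take a sequence $\{h_i\}_{i\geq 1}$ of distinct elements of $F_{n,r}'$ each supported inside $(a,b)$, and set $g_i = h_i g h_i^{-1}$. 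The key identity is $g_i^{-1} g_j = h_j g h_j^{-1} \cdot$ wait — $g_i^{-1}g_j = h_i g^{-1} h_i^{-1} h_j g h_j^{-1}$; writing this as $h_i (g^{-1} h_i^{-1} g)(g^{-1} h_j g) h_j^{-1}$, and using that $g^{-1} h_i^{-1} g$ is supported on $g^{-1}((a,b))$ which is disjoint from $(a,b)$, one checks this element is a product of elements each supported on $(a,b)$ or on $g^{-1}((a,b))$, all of which lie in $F_{n,r}'$ (the commutator subgroup is generated by elements supported on proper subintervals, and a piecewise-linear homeomorphism of $(a,b)$ with the right slopes, extended by the identity, lies in $F_{n,r}' = (F_{n,r}^0)'$). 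The distinctness of the $g_i$ follows from the distinctness of the $h_i$ since conjugation is injective.

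The main obstacle, and the reason this needs care rather than a one-line citation, is the discontinuity of elements of $G_{n,r}$: unlike the $F_{n,r}$ case, an arbitrary $g\in G_{n,r}$ need not be a homeomorphism, so I must confirm that a nontrivial $g\in G_{n,r}$ genuinely moves some interval $(a,b)$ with dyadic-rational ($n$-adic-rational) endpoints entirely off itself. If $g\neq e$ then $g$ is not the identity on $\mathbb Z[1/n]\cap[0,r)$, hence there is a point $p\in\mathbb Z[1/n]$ with $g(p)\neq p$; by right-continuity and piecewise-linearity $g$ is affine with slope $n^k$ on a small right-neighborhood $[p,p+\epsilon)$, so for $\epsilon$ small enough with $p+\epsilon\in\mathbb Z[1/n]$ the image $g([p,p+\epsilon))$ is an interval disjoint from $[p,p+\epsilon)$ — possibly after shrinking further so that the whole block $[p,p+\epsilon)$ maps affinely and the images stay away. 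Once this interval is produced, the rest of the argument is the same as for $F_{n,r}$. I would also remark that $\xi$ separating and all the earlier structural facts carry over verbatim, so no new representation-theoretic input is needed; only this elementary dynamical observation about $G_{n,r}$ and the identification of the relevant products with elements of $F_{n,r}'$ require attention.
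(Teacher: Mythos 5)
Your proof follows the paper's argument essentially verbatim: hypothesis (i) is exactly Corollary \ref{CorollaryFnr}(1), and hypothesis (ii) is verified, as in the paper, by conjugating $g$ by infinitely many elements of $F_{n,r}'$ supported in an interval that $g$ moves off itself, and checking that the resulting quotients $g_i^{-1}g_j$ are continuous elements supported away from the endpoints, hence in $R$. The one point to tighten is that the factors $g^{-1}h_i^{\pm 1}g$ in your expansion are supported on $g^{-1}((a,b))$, so you need $g$ to be continuous (indeed affine with slope a power of $n$) on $g^{-1}((a,b))$ as well as on $(a,b)$ for these factors to be honest elements of $F_{n,r}$; the paper makes this explicit by choosing $I$ with $g$ continuous on both $I$ and $g^{-1}(I)$, and in your setup it is arranged by shrinking $(a,b)$ further so as to avoid the finitely many discontinuities and singularities of $g^{-1}$, after which the argument goes through as written.
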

\begin{proof} Corollary \ref{CorollaryFnr} shows that the group $R$ has no proper characters. Consider an arbitrary element $g\in G,g\neq e$. Choose an open interval $I$ such that $I\cap g^{-1}(I)=0$ and $g$ is continuous on both $I$ and $g^{-1}(I)$. It follows that for any two elements $r_1,r_2\in R$ with $\supp(r_1)\subset I$, $\supp(r_2)\subset I$ the element $$h=r_2g^{-1}r_2^{-1}r_1gr_1^{-1}\neq e$$ is a continuous bijection of $[0,r)$. Observe that $h$ acts identically near $0$ and $r$.
It follows that $h\in R$ and the elements $r_1gr_1^{-1}$  and $r_2gr_2^{-1}$ belong to the same coset of $G/R$. Since the group $R$ has infinitely many elements supported by the set $I$, we immediately establish  the condition  (ii).
\end{proof}

The following result is an immediate corollary of  Theorem \ref{TheoremCharactersNonSimpleGroups} applied twice to the pairs  $R = F_{n,r}'$, $G = (G_{n,r})'$ and  $R = F_{n,r}'$, $G = G_{n,r}$.

\begin{corollary}  (1) The group $G_{n,r}'$ has no proper characters.

(2) If $\chi$ is an indecomposable character of $G_{n,r}$, then $\chi$ is either regular or $\chi(g) = \rho([g])$, where $[g]$ is the image of $g$ in the abeliazation of $G_{n,r}$ and $\rho: G_{n,r}/G_{n,r}' \rightarrow \mathbb T$ is a group homomorphism.
\end{corollary}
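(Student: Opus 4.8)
The plan is to invoke Theorem \ref{TheoremCharactersNonSimpleGroups} twice, exactly as the statement announces, using Lemma \ref{LemmaConditionsOnGroups} as the source of the hypotheses. The only thing that needs checking is that the hypotheses of Theorem \ref{TheoremCharactersNonSimpleGroups} are genuinely met for the two pairs $(R,G) = (F_{n,r}', (G_{n,r})')$ and $(R,G) = (F_{n,r}', G_{n,r})$, and that the normal closures $N(R)$ in those two ambient groups are identified correctly so that the quotients $G/N(R)$ are the ones claimed.

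First I would record that $R = F_{n,r}'$ is an ICC group (it is a non-abelian simple group, hence ICC) and has no proper characters by Corollary \ref{CorollaryFnr}(1); this gives condition (i) of Theorem \ref{TheoremCharactersNonSimpleGroups}. Next, Lemma \ref{LemmaConditionsOnGroups} verifies condition (ii) for $G = G_{n,r}$: for each $g \neq e$ one picks an interval $I$ with $I \cap g^{-1}(I) = \varnothing$ on which $g$ and $g^{-1}$ are continuous, takes infinitely many distinct $r_i \in R$ supported in $I$, and checks that the conjugates $g_i = r_i g r_i^{-1}$ are distinct and satisfy $g_i^{-1} g_j \in R$. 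The same argument, verbatim, applies with $G$ replaced by $(G_{n,r})'$, since $R = F_{n,r}' \subseteq (G_{n,r})'$ and the elements $h = r_2 g^{-1} r_2^{-1} r_1 g r_1^{-1}$ constructed there are continuous bijections of $[0,r)$ acting trivially near the endpoints, hence lie in $F_{n,r}' = R$; one just needs $g$ to range over $(G_{n,r})' \setminus \{e\}$, which is a subset of $G_{n,r} \setminus \{e\}$, so the construction still works. Thus condition (ii) holds in both cases.

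With the hypotheses in hand, Theorem \ref{TheoremCharactersNonSimpleGroups} applied to $(R,G) = (F_{n,r}', (G_{n,r})')$ says every finite factor representation of $(G_{n,r})'$ is regular or factors through $(G_{n,r})'/N(R)$; but $N(R)$, the normal closure of $F_{n,r}'$ inside the simple group $(G_{n,r})'$, is all of $(G_{n,r})'$ (it is a nontrivial normal subgroup of a simple group), so the quotient is trivial and the non-regular option degenerates to the identity character. This proves statement (1): $(G_{n,r})'$ has no proper characters. For statement (2), apply Theorem \ref{TheoremCharactersNonSimpleGroups} to $(R,G) = (F_{n,r}', G_{n,r})$: an indecomposable character $\chi$ of $G_{n,r}$ is either regular or of the form $\chi(g) = \omega([g])$ for a finite factor representation $\omega$ of $G_{n,r}/N(R)$. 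Here I would identify $N(R)$: since $F_{n,r}' \subseteq G_{n,r}'$ and $G_{n,r}'$ is simple and normal in $G_{n,r}$, the normal closure of $F_{n,r}'$ in $G_{n,r}$ equals $G_{n,r}'$, so $G_{n,r}/N(R) = G_{n,r}/G_{n,r}'$, the abelianization. Since this quotient is abelian, $\omega$ is scalar, $\omega([g]) = \rho([g])$ for a homomorphism $\rho : G_{n,r}/G_{n,r}' \to \mathbb{T}$ (as recorded in the remark preceding Corollary \ref{CorollaryFnr}), giving the stated form.

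The main obstacle — really the only point requiring care — is the identification of the normal closures $N(R)$, i.e., verifying that the normal closure of $F_{n,r}'$ in $G_{n,r}$ is exactly $G_{n,r}'$ (and, in the first application, that its normal closure in $(G_{n,r})'$ is all of $(G_{n,r})'$). For the inclusion $N(F_{n,r}') \subseteq G_{n,r}'$ one uses that $F_{n,r}' \subseteq G_{n,r}'$ and $G_{n,r}'$ is normal; for the reverse inclusion, since $G_{n,r}'$ is simple and $N(F_{n,r}')$ is a nontrivial normal subgroup of $G_{n,r}$ contained in $G_{n,r}'$ (nontrivial because $F_{n,r}' \neq \{e\}$), and $N(F_{n,r}')$ is normal in $G_{n,r}'$ as well, simplicity of $G_{n,r}'$ forces $N(F_{n,r}') = G_{n,r}'$. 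This is routine once phrased correctly, but it is the step where one must be attentive to which ambient group the normal closure is taken in.
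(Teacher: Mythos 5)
Your proposal is correct and follows exactly the route the paper takes: the paper declares the corollary an immediate consequence of Theorem \ref{TheoremCharactersNonSimpleGroups} applied to the pairs $(F_{n,r}',(G_{n,r})')$ and $(F_{n,r}',G_{n,r})$, with Lemma \ref{LemmaConditionsOnGroups} supplying the hypotheses. You merely make explicit the details the paper leaves implicit — that the lemma's verification of condition (ii) carries over to $G=(G_{n,r})'$, and that simplicity of $G_{n,r}'$ identifies the normal closure $N(F_{n,r}')$ as $(G_{n,r})'$ resp.\ $G_{n,r}'$ in the two cases — which is exactly the right bookkeeping.
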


\subsection{Full groups of irreducible shifts of finite type}

We refer the reader to \cite[Section 6]{Matui:2012} for the comprehensive study of full groups of \'etale groupoids including the groups discussed below.

Let $(V,E)$ be a finite directed graph. Suppose that the adjacency matrix of the graph is irreducible and is not a permutation matrix. For an edge $e\in E$, denote by $i(e)$ the initial vertex and by $t(e)$ its terminal vertex.  Set $$X = \{\{e_n\}_{n\geq 1}\in E^\mathbb N : t(e_k) = i(e_{k+1})\mbox{ for every }k\in \mathbb X\}.$$
Equipped with the product topology,  $X$ is a Cantor set.  We note that the space $X$ along with the left shift is called a one-sided subshift of finite type, see \cite{Matui:2012} and references therein regarding relations with the symbolic dynamics.

An $n$-tuple $(e_1,\ldots,e_n)\in E^n$  is called {\it admissible} if $t(e_k) = i(e_{k+1})$ for every $1\leq k\leq n-1$. Two admissible tuples $\overline e = (e_1,\ldots,e_n)$ and $\overline f = (f_1,\ldots,f_m)$ are called {\it compatible} if $t(e_n) = t(f_m)$. Each admissible tuple $\overline e = (e_1,\ldots,e_n)$ defines a clopen set $U(\overline e) = \{x\in X : x_i = e_i,\;i=1,\ldots,n\}$. Such clopen sets form the base of topology.  Given two compatible admissible tuples $\overline e_1$  and $\overline e_2$, define a continuous map $\pi_{\overline e_1,\overline e_2}: U(\overline e_1)\rightarrow U(\overline e_2)$ as
$$\pi_{\overline e_1,\overline e_2}(\overline e_1, x_{n+1},x_{n+2},\ldots) = (\overline e_2,x_{n+1},x_{n+2},\ldots).$$

\begin{definition} Following \cite{Matui:2012}, we define the {\it full group} of $X$, in symbols $[[X]]$, as the set of all homeomorphisms $g$ of $X$ for which there exists two clopen partitions $X = \bigsqcup_{i=1}^n U(\overline e_i) = \bigsqcup_{i=1}^n U(\overline f_i)$ with $e_i$ and $f_i$ being compatible admissible tuples (possibly of different lengths), $i=1,\ldots, n$, such that $g|U_{\overline e_i} = \pi_{\overline e_i, \overline f_i}$ for every $i=1,\ldots, n$.

For a clopen subset $Y\subset X$, set $[[X|Y]]$ as the set of all $g\in [[X]]$ with $\supp(g)\subset Y$.
\end{definition}
The following result was established in \cite[Lemma 6.1 and Theorem 4.16]{Matui:2012}

\begin{proposition}\label{PropositionSimpleFullGroup} For any clopen set $Y\subset X$,  the commutator group $[[X|Y]]'$ is simple.
\end{proposition}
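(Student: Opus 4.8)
The plan is to prove simplicity of $[[X|Y]]'$ by the method standard for topological full groups: couple a transitivity property of the action with a commutator‑extraction argument. Fix a nonempty clopen $Y\subseteq X$ and write $\Gamma=[[X|Y]]$, $\Gamma'=[[X|Y]]'$. The first ingredient is a \emph{transitivity} statement: for any two nonempty clopen sets $A,B\subseteq Y$ there are a nonempty clopen $A_0\subseteq A$ and an element $\gamma\in\Gamma'$ with $\gamma(A_0)\subseteq B$. To see this, shrink $A$ and $B$ to cylinders $U(\overline e)\subseteq A$, $U(\overline f)\subseteq B$ contained in $Y$; since the adjacency matrix is irreducible, fix any vertex $v$ and admissible extensions $\overline e\subset\overline e'$, $\overline f\subset\overline f'$ both terminating at $v$, chosen so that $U(\overline e')\cap U(\overline f')=\varnothing$. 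Then $\pi_{\overline e',\overline f'}$ extends to the element of $\Gamma$ that swaps $U(\overline e')$ with $U(\overline f')$ and fixes everything else; this element is supported in a proper clopen subset of $Y$.

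The second ingredient is that \emph{small‑support elements are commutators and generate} $\Gamma'$. Because the adjacency matrix is irreducible and not a permutation matrix, some vertex branches, so every cylinder can be extended until it splits; using transitivity one then finds, for any proper nonempty clopen $D\subsetneq Y$, clopen sets $D_1,D_2$ with $D,D_1,D_2$ pairwise disjoint in $Y$ and elements of $\Gamma$ carrying $D$ onto $D_1$ and $D_2$. A routine computation writes each $g\in[[X|D]]$ as a single commutator in $\Gamma$; in particular $[[X|D]]\subseteq\Gamma'$, and the transpositions produced above lie in $\Gamma'$. Decomposing an arbitrary $g\in\Gamma$ via its defining clopen partition shows that $\Gamma'$ is generated by the subgroups $[[X|D]]$ as $D$ ranges over small clopen subsets of $Y$. (This is the content of the cited Lemma~6.1 of \cite{Matui:2012}.)

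Now let $N\trianglelefteq\Gamma'$ be nontrivial and pick $g\in N$, $g\neq e$. There is a nonempty clopen $U$ with $g(U)\cap U=\varnothing$ and $U\cup g(U)\subseteq Y$. For every $h\in[[X|U]]$ the commutator $[g,h]=(ghg^{-1})h^{-1}$ lies in $N$, is supported in $U\sqcup g(U)$, and equals $h^{-1}$ on $U$. Combining two such commutators and applying $g$ once more to separate off the part living on $g(U)$, one extracts that $N\supseteq[[X|U]]'$, hence, shrinking $U$ to a clopen $U_0\subseteq U$ and invoking the previous paragraph, $N\supseteq[[X|U_0]]$. Finally, given any $f\in\Gamma'$, write $f=f_1\cdots f_m$ with each $f_i$ supported in a small clopen $D_i$; partitioning $D_i$ into finitely many pieces and using transitivity to move each piece into $U_0$ by some $\gamma\in\Gamma'$, we express each $f_i$ as a product of $\Gamma'$‑conjugates of elements of $[[X|U_0]]\subseteq N$. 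Hence $f\in N$, so $N=\Gamma'$, which is the desired simplicity.

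The genuinely delicate step is the third one: passing from "$N$ contains the commutators $[g,h]$'' to "$N$ contains a whole block group $[[X|U_0]]$'', which requires careful bookkeeping of supports, and — more importantly — verifying at each stage that the auxiliary homeomorphisms can be taken inside $\Gamma'$ rather than merely in $\Gamma$. It is precisely here that irreducibility together with the hypothesis that the adjacency matrix is not a permutation matrix is used, providing the "room to split'' (infinite multiplicity of cylinders) that makes small‑support elements commutators. This is essentially Theorem~4.16 of \cite{Matui:2012}, applied to the (minimal, purely infinite) reduction of the shift groupoid to the clopen set $Y$.
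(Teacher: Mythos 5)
The paper does not actually prove this proposition --- it simply cites Lemma~6.1 and Theorem~4.16 of \cite{Matui:2012}, and what you outline (transitivity of $\Gamma'$ on clopen subsets of $Y$, small-support elements being commutators via the ``room to split'' coming from irreducibility and the non-permutation hypothesis, and the double-commutator extraction showing that a nontrivial normal subgroup of $[[X|Y]]'$ absorbs a block subgroup $[[X|U_0]]$ and hence everything) is precisely the standard argument behind those cited results. Your sketch is therefore essentially the same approach as the paper's (modulo the paper outsourcing it entirely), and you correctly flag the only genuinely delicate points, namely that the auxiliary homeomorphisms must be arranged to lie in $\Gamma'$ so that normality of $N$ can be invoked, and that ``each $g\in[[X|D]]$ is a single commutator'' is the step where the dynamical hypotheses are really consumed.
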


Fix an arbitrary point $x_0\in X$. Find an increasing sequence of clopen sets $\{Y_n\}$  such that $X\setminus \{x_0\} = \bigcup_nY_n$. Set $R = \bigcup_n [[X|Y_n]]'$. It follows from Proposition \ref{PropositionSimpleFullGroup} that the group $R$ is simple. Observe that the group $R$ consists of all elements $g\in [[X]]'$ equal to the identity on some neighbourhood of $x_0$.

Denote by $\mathcal F$ the set of all admissible tuples which are {\it not prefixes} of $x_0$. Define $\mathfrak{U}$ as the family of all finite unions of sets from  $\{U(\overline e)\}_{\overline e \in \mathcal F}$. Notice that $\mathfrak{U}$ is a base of the topology on $X\setminus \{x_0\}$. One can check that $\mathfrak U$ satisfies conditions (i)-(iv) of Definition \ref{DefinitionCompressibleBase} for the action of $R$.  Thus, using Theorem \ref{TheoremNoCharactersSimpleGroups}, we conclude that the group $R$ has no  characters.  Considering $R$ as a subgroup of $G = [[X]]$, one can check that the assumptions of Theorem \ref{TheoremCharactersNonSimpleGroups} are satisfied. We leave the details to the reader.

\begin{corollary} If $\chi$ is an indecomposable character of $[[X]]$, then $\chi$ is either regular or $\chi(g) = \rho([g])$, where $[g]$ is the image of $g$ in the abelianization of $[[X]]$ and $\rho: [[X]]/[[X]]' \rightarrow \mathbb T$ is a group homomorphism.
\end{corollary}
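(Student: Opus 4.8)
The plan is to verify the hypotheses of Theorems \ref{TheoremNoCharactersSimpleGroups} and \ref{TheoremCharactersNonSimpleGroups} for the pair $R = \bigcup_n [[X|Y_n]]'$ and $G = [[X]]$, exactly as in the Higman-Thompson case. The corollary then follows by combining the two theorems, together with the fact that finite factor representations of the abelian group $[[X]]/[[X]]'$ are scalar and correspond to homomorphisms into $\mathbb{T}$.

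First I would record that $R$ is a simple group: by Proposition \ref{PropositionSimpleFullGroup} each $[[X|Y_n]]'$ is simple, and an increasing union of simple groups in which each is normal in the next is simple (any nontrivial normal subgroup meets some $[[X|Y_n]]'$ nontrivially, hence contains it, hence — using that the $Y_n$ exhaust $X\setminus\{x_0\}$ and conjugation moves supports around — contains all of $R$). Next I would check that $R$ acts compressibly on the locally compact space $X\setminus\{x_0\}$ with respect to the base $\mathfrak{U}$ of finite unions of cylinders $U(\overline e)$, $\overline e\in\mathcal F$ (admissible tuples that are not prefixes of $x_0$). Conditions (i) and (iv) are immediate: supports of elements of $R$ are compact subsets of $X\setminus\{x_0\}$, hence covered by finitely many cylinders avoiding $x_0$, and $\mathfrak{U}$ is closed under finite unions. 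For (ii), given $U_1,U_2\in\mathfrak U$ I would use irreducibility of the adjacency matrix (and that it is not a permutation matrix) to find, inside $U_2$, a cylinder $U(\overline f)$ compatible with some cylinder $U(\overline e)\subset U_1$, then use the prefix-replacement homeomorphism $\pi_{\overline e,\overline f}$ — after first compressing $U_1$ into a single such cylinder, which is again possible because the subshift is genuinely branching; one assembles the required element of $R$ as a finite clopen rearrangement supported away from $x_0$. Condition (iii) is the analogue of the argument in Lemma \ref{LemmaRUConditions}: given $\overline{U}_1\cap\overline{U}_2=\varnothing$, one rearranges $U_1$ onto a cylinder disjoint from both $U_2$ and $U_3$ by an element of $R$ whose support is a clopen set disjoint from $U_2$, which exists because $X$ has infinitely many cylinders and the groupoid is ``rich enough'' to realize any compatible clopen rearrangement.

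With compressibility established, Theorem \ref{TheoremNoCharactersSimpleGroups} gives that $R$ has no proper characters. Then I would verify condition (ii) of Theorem \ref{TheoremCharactersNonSimpleGroups}: given $g\in[[X]]\setminus\{e\}$, pick a cylinder $U(\overline e)\subset X\setminus\{x_0\}$ with $g(U(\overline e))\cap U(\overline e)=\varnothing$ (possible since $g\neq e$ moves some point, and $g$ is locally a prefix-replacement so it moves a whole cylinder; shrink to avoid $x_0$). The group $[[X|U(\overline e)]]'$ is infinite and simple, so it contains a sequence of distinct elements $\{h_i\}$; set $g_i = h_i g h_i^{-1}$. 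Then the $g_i$ are pairwise distinct (their germs on $U(\overline e)$ differ), and $g_i^{-1}g_j = h_j g^{-1} h_j^{-1} h_i g h_i^{-1}$, which one checks lies in $R$: it is supported away from $x_0$ and lies in $[[X]]'$ because the ``product of two commutators'' structure, together with $g(U(\overline e))\cap U(\overline e)=\varnothing$ and the simplicity/commutator computations for full groups of subshifts, forces it into the commutator subgroup restricted to a proper clopen set, i.e. into some $[[X|Y_n]]'$. Applying Theorem \ref{TheoremCharactersNonSimpleGroups} to $R\le[[X]]$ then shows every finite factor representation is regular or factors through $[[X]]/N(R)$; since $R$ generates $[[X]]'$ as a normal subgroup, $N(R)=[[X]]'$, and $[[X]]/[[X]]'$ is abelian, so any such factor representation is scalar and given by a homomorphism $\rho:[[X]]/[[X]]'\to\mathbb T$.

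The main obstacle is the careful dynamical bookkeeping in verifying conditions (ii) and (iii) of Definition \ref{DefinitionCompressibleBase} — specifically, showing that one can always compress an arbitrary finite union of cylinders into a single cylinder of prescribed compatibility type using an element of $R$ (supported away from $x_0$), which is where irreducibility of the adjacency matrix and the non-permutation hypothesis are essential; without the latter there may not be ``enough room'' to perform the rearrangements. A secondary technical point is confirming that the relevant ``product of conjugates'' elements genuinely land in $[[X]]'$ (not just in $[[X]]$), which relies on the commutator structure of full groups of subshifts from \cite{Matui:2012} — this is the analogue of the sentence ``It follows that $h\in R$'' in Lemma \ref{LemmaConditionsOnGroups} and deserves a line of justification via the fact that a homeomorphism supported on a proper clopen set and expressible as a product of commutators lies in $[[X|Y]]'$ for a suitable $Y$.
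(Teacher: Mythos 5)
Your proposal follows exactly the route the paper takes: verify that $R=\bigcup_n[[X|Y_n]]'$ is simple and acts compressibly on $X\setminus\{x_0\}$ with respect to $\mathfrak U$, apply Theorem \ref{TheoremNoCharactersSimpleGroups} to get that $R$ has no proper characters, and then apply Theorem \ref{TheoremCharactersNonSimpleGroups} to the pair $R\le[[X]]$, identifying $N(R)=[[X]]'$ via simplicity of $[[X]]'$. The paper explicitly leaves these verifications to the reader, and your filled-in details (including the condition (ii) check via conjugates supported in a cylinder moved off itself by $g$) are consistent with its sketch.
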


To finish our discussion, we notice that  the full group of the one-sided Bernoulli shift over the alphabet with $n$ letters is isomorphic to $G_{n,1}$ \cite{Nekrashevych:2004}. \medskip

{\bf Acknowledgements. } We would like to thank R.~Grigorchuk for the discussion of  Higman-Thompson groups and for his valuable comments.


\begin{thebibliography}{}

\bibitem{Brown:1987} K.~Brown, \emph{Finiteness properties of groups}. J. Pure Appl. Algebra \textbf{44} (1987), no. 1-3, 45-75.

\bibitem{CFP:1996} J.W.~Cannon, J.W.~Floyd, W.R.~Parry, \emph{Introductory notes on Richard Thompson's groups.}
Enseign. Math. (2) 42 (1996), no. 3--4, 215--256.

\bibitem{Connes:1976}  A.~Connes, \emph{Classification of injective factors. Cases $II_1$, $II_\infty$, $III_\lambda$, $\lambda \neq 1$} Ann. of Math. (2) 104 (1976), no. 1, 73--115.

\bibitem{Feldman_Moore:I} J.~Feldman and C.~Moore, \emph{Ergodic Equivalence Relations, Cohomology, and Von Neumann Algebras. I.} Trans. of the AMS, \textbf{234}, No.
2. (1977), 289--324.



\bibitem{Dudko_Medynets:2012} A.~Dudko and K.~Medynets, \emph{On Characters of Inductive Limits of Symmetric Groups},  arXiv:1105.6325, (2012).

\bibitem{Grigorchuk:2011}
 R.~Grigorchuk, \emph{Some problems of the dynamics of group actions on rooted trees.} (Russian) Tr. Mat. Inst. Steklova {\bf 273} (2011), Sovremennye Problemy Matematiki, 72--191; translation in Proc. Steklov Inst. Math. {\bf 273} (2011), no. 1, 64--175

\bibitem{Higman:1974}  G.~Higman, \emph{Finitely presented infinite simple groups}. Notes on Pure Mathematics, No. 8 (1974). Department of Pure Mathematics, Department of Mathematics, I.A.S. Australian National University, Canberra, 1974. vii+82 pp.

    \bibitem{Kechris:1994} R.~Dougherty, S.~Jackson and A.S.~Kechris, \emph{The structure of hyperfinite Borel equivalence relations}, Trans. Amer. Math. Soc. {\bf 341} (1994), 193-225.

\bibitem{Matui:2012}  H.~Matui,  \emph{Topological full groups of one-sided shifts of finite type}, (2012)
   arXiv:1210.5800.

\bibitem{Nekrashevych:2004}  V.~Nekrashevych, \emph{Cuntz-Pimsner algebras of group actions}.
J. Operator Theory \textbf{52} (2004), no. 2, 223-249.

\bibitem{Stein:1992} M.~Stein, \emph{Groups of piecewise linear homeomorphisms}. Trans. Amer. Math. Soc. \textbf{332} (1992), no. 2, 477-514.

\bibitem{Tak} M. Takesaki, Theory of operator algebras I.
Encyclopedia of Mathematical Sciences, vol. 124.

\bibitem{Vershik:NonfreeActions}   A.~Vershik, \emph{Nonfree Actions of Countable Groups and their Characters},  arXiv:1012.4604 (2010).

\end{thebibliography}
\end{document}